\newcommand{\call}[4]{\int_{#1}^{#2} {#3} \; \textrm{d} {#4}}
\newcommand{\mb}[1]{\mathbb{{#1}}}
\newcommand{\mc}[1]{\mathcal{{#1}}}
\newcommand{\1}{\mathbf{{1}}}
\newcommand{\e}{\varepsilon}
\newcommand{\po}[2]{\frac{\textrm{d} #1}{\textrm{d} #2}}
\newcommand{\vp}{\varphi}
\newcommand{\norma}[2]{\Vert #1 \Vert_{#2}}
\newcommand{\scal}[2]{\left\langle #1, #2 \right\rangle}
\DeclareMathOperator{\vol}{vol}
\theoremstyle{definition}
\newtheorem{prop}{Proposition}
\newtheorem{rem}{Remark}
\newtheorem{thm}{Theorem}
\newtheorem{cor}{Corollary}
\newtheorem{lem}{Lemma}
\newtheorem{ex}{Example}
\newtheorem{defi}{Definition}
\newtheorem*{que*}{Question}
\title{On the Brunn-Minkowski inequality for general measures with applications to new isoperimetric-type inequalities}
\author{Galyna Livshyts\thanks{ supported in part by the U.S. National Science
Foundation Grant DMS-1101636}  \thanks{supported in part by the Institute for Mathematics and its Applications with funds provided by the National Science Foundation}, Arnaud Marsiglietti\footnotemark[2], Piotr Nayar \thanks{supported in part by NCN grant DEC-2012/05/B/ST1/00412} \footnotemark[2], Artem Zvavitch\footnotemark[1]}
\date{\today}
\begin{document}

\newgeometry{tmargin=2.2cm, bmargin=2.4cm, lmargin=2.2cm, rmargin=2.2cm}

\maketitle

\begin{abstract}
In this paper we present new versions of the classical Brunn-Minkowski inequality for different classes of measures and sets. We show that the inequality
\[
	\mu(\lambda A + (1-\lambda)B)^{1/n} \geq \lambda \mu(A)^{1/n} + (1-\lambda)\mu(B)^{1/n}
\]
holds true for an unconditional product measure $\mu$ with decreasing density and a pair of unconditional convex bodies $A,B \subset \mb{R}^n$. We also show that the above inequality is true for any unconditional $\log$-concave measure $\mu$ and unconditional convex bodies $A,B \subset \mb{R}^n$. Finally, we prove that the inequality is true for
a symmetric $\log$-concave measure $\mu$ and a pair of symmetric convex sets $A,B \subset \mb{R}^2$, which, in particular,  settles two-dimensional case of the conjecture for Gaussian measure proposed in \cite{GZ}.

In addition, we deduce the $1/n$-concavity of the parallel volume $t \mapsto \mu(A+tB)$, Brunn's type theorem and certain analogues of Minkowski first inequality.
\end{abstract}

\noindent {\bf 2010 Mathematics Subject Classification.}  Primary 52A40; Secondary 60G15.

\noindent {\bf Keywords and phrases.} Convex body, Gaussian measure, Brunn-Minkowski inequality, Minkowski first inequality, S-inequality, Brunn's theorem, Gaussian isoperimetry, log-Brunn-Minkowski inequality.

\vspace{1cm}

\section{Introduction}

The classical Brunn-Minkowski inequality states that for any two non-empty compact sets $A,B \subset \mb{R}^n$ and any $\lambda \in [0,1]$ we have
\begin{equation}\label{clasBM}
	\vol_n(\lambda A + (1-\lambda)B)^{1/n} \geq \lambda \vol_n(A)^{1/n} + (1-\lambda)\vol_n(B)^{1/n},
\end{equation}
with equality if and only if $B=aA+b$, where $a >0$ and  $b \in \mb{R}^n$. Here $\vol_n$ stands for the Lebesgue measure on $\mb{R}^n$ and
\[
A + B = \{a + b : a\in A, b \in B\}
\]
is the Minkowski sum of $A$ and $B$. Due to homogeneity of the volume, this inequality is equivalent to $\vol_n(A+B)^{1/n} \geq \vol_n(A)^{1/n}+\vol_n(B)^{1/n}$. The Brunn-Minkowski inequality turns out to be a powerful tool. In particular, it implies the classical isoperimetric inequality: for any compact set $A \subset \mb{R}^n$ we have $\vol_n(A_t) \geq \vol_n(B_t)$, $t \geq 0$, where $B$ is a Euclidean ball satisfying $\vol_n(A)=\vol_n(B)$ and $A_t$ stands for the $t$-enlargement of $A$, i.e., $A_t=A+tB_2^n$, where $B_2^n$ is the unit Euclidean ball, $B_2^n=\{x: |x|=1\}$. To see this it is enough to observe that
\[
	\vol_n(A+tB_2^n)^{1/n} \geq  \vol_n(A)^{1/n} +  \vol_n(tB_2^n)^{1/n}  =  \vol_n(B)^{1/n} +  \vol_n(tB_2^n)^{1/n} =\vol_n(B+tB_2^n)^{1/n}.
\]
Taking $t \to 0^+$ one gets a more familiar form of isoperimetry: among all sets with fixed volume the surface area
\[
	\vol_n^+(\partial A) = \liminf_{t \to 0^+} \frac{\vol_n(A + tB_2^n) - \vol_n(A)}{t}
\]
is minimized in the case of the Euclidean ball. We refer to \cite{G1} for more information on Brunn-Minkowski-type inequalities.

Using the inequality between means one gets an a priori weaker dimension free form of \eqref{clasBM}, namely
\begin{equation}\label{wBM}
	\vol_n(\lambda A + (1-\lambda)B) \geq \vol_n(A)^\lambda \vol_n(B)^{1-\lambda}.
\end{equation}
In fact \eqref{wBM} and \eqref{clasBM} are equivalent. To see this one has to take $\tilde{A}=A/\vol_n(A)^{1/n}$, $\tilde{B}=B/\vol_n(B)^{1/n}$ and $\tilde{\lambda}=\lambda \vol_n(A)^{1/n}/(\lambda \vol_n(A)^{1/n} + (1-\lambda) \vol_n(B)^{1/n})$ in \eqref{wBM}. This phenomenon is a consequence of homogeneity of the Lebesgue measure.

The above notions can be generalized to the case of the so-called $s$-concave measures. Here we assume that $s > 0$, whereas in general the notion of $s$-concave measures makes sense for any $s\in[-\infty,\infty]$. We say that a measure $\mu$ on $\mb{R}^n$ is $s$-concave if for any non-empty compact sets $A,B \subset \mb{R}^n$ we have
\begin{equation}\label{s-concave}
\mu(\lambda A+(1-\lambda)B)^s \geq \lambda \mu(A)^s +(1-\lambda) \mu(B)^s.
\end{equation}
Similarly, a measure $\mu$ is called log-concave (or $0$-concave) if for any compact sets $A,B \subset \mb{R}^n$ we have
\begin{equation}\label{log-concave}
\mu(\lambda A+(1-\lambda)B) \geq  \mu(A)^\lambda  \mu(B)^{1-\lambda}.
\end{equation}
We say that the support of measure  $\mu$ is non-degenerate  if it is not contained in any affine subspace of $\mb{R}^n$ of dimension less than $n$. It was proved by Borell (see  \cite{B}) that a measure $\mu$, with non-degenerate support, is log-concave if and only if it has a log-concave density, i.e. a density  of the form $\vp=e^{-V}$, where $V$ is convex (and may attain value $+\infty$).  Moreover, $\mu$ is $s$-concave with $s \in (0,1/n)$ if and only if it has a density $\vp$  such that $\vp^{\frac{s}{1-sn}}$ is concave. In the case $s=1/n$ the density has to satisfy the strongest condition $\vp(\lambda x+(1-\lambda)y) \geq \max(\vp(x),\vp(y))$. An example of such measure is the uniform measure on a convex body $K \subset \mb{R}^n$. Let us also notice that a measure with non-degenerate support cannot be $s$-concave with $s>1/n$. It can be seen by taking $\tilde{A}=\e A$ and $\tilde{B}=\e B$ in \eqref{s-concave}, sending $\e \to 0^+$ and comparing the limit with the Lebesgue measure.

Inequality \eqref{wBM} says that the Lebesgue measure is log-concave, whereas \eqref{clasBM} means that it is also $1/n$-concave. In general log-concavity does not imply $s$-concavity for $s>0$.  Indeed, consider the standard Gaussian measure $\gamma_n$ on $\mb{R}^n$, i.e., the measure with density $(2\pi)^{-n/2}\exp(-|x|^2/2)$. This density is clearly log-concave and therefore $\gamma_n$ satisfies \eqref{log-concave}. To see that $\gamma_n$ does not satisfy \eqref{s-concave} for $s>0$ it suffices to take $B=\{x\}$ and send $x \to \infty$. Then the left hand side converges to $0$ while the right hand side stays equal to $\lambda \mu(A)^s$, which is strictly positive for $\lambda > 0$ and $\mu(A)>0$.

One might therefore ask whether \eqref{s-concave} holds true for $\gamma_n$ if we restrict ourselves  to some special class of subsets of $\mb{R}^n$. In \cite{GZ} R. Gardner and the fourth named author conjectured (Question 7.1) that
\begin{equation}\label{GBM0}
	\gamma_n(\lambda A + (1-\lambda)B)^{1/n} \geq \lambda \gamma_n(A)^{1/n} + (1-\lambda) \gamma_n(B)^{1/n}
\end{equation}
holds true for any closed convex sets with $0 \in A \cap B$ and $\lambda \in [0,1]$ and verified this conjecture in the following cases:
\begin{itemize}
\item[(a)] when $A$ and $B$ are products of intervals containing the origin,
\item[(b)] when $A=[-a_1,a_2] \times \mb{R}^{n-1}$, where $a_1,a_2>0$ and $B$ is arbitrary,
\item[(c)] when $A = aK$ and $B = bK$ where $a, b > 0$ and $K$ is a convex set, symmetric with respect to the origin.
\end{itemize}
It is interesting to note that the  case (c) is related to the B-conjecture for Gaussian measures proposed by Banaszczyk (see \cite{L2}) and solved by Cordero-Erausquin,  Fradelizi, and Maurey (see \cite{CFM}). It states that for any convex symmetric set $K$ the function $t \mapsto \gamma_n(e^t K)$ is log-concave. The B-conjecture is asking the same question for the general class of the even  log-concave measures. It was shown in \cite{CFM} that the conjecture  is true for the case of  unconditional log-concave measures and unconditional sets (see the definition below). Moreover, the conjecture has an affirmative answer for $n = 2$ due to the works of Livne Bar-on \cite{Li} and of Saroglou \cite{S2}. In \cite{S2} the proof is done by linking the problem to the new log-Brunn-Minkowski inequality of B\"or\"oczky,  Lutwak,  Yang and  Zhang, see \cite{BLYZ1}, \cite{BLYZ2}, \cite{S1} and \cite{S2}.  In \cite{M2} the second named author proved that the assertion of the $B$-conjecture for a measure $\mu$ with a radially decreasing density and a symmetric convex body $K$ formally implies the $1/n$-concavity of the measure $\mu$ on the set of dilates of $K$.

In \cite{NT} T. Tkocz and the third named author showed that in general \eqref{GBM0} is false under the assumption $0 \in A \cap B$. For sufficiently small $\e>0$ and $\alpha < \pi/2$ sufficiently close to $\pi/2$ the pair of sets
\[
	A=\{(x,y) \in \mb{R}^2: \ y \geq |x| \tan \alpha\}, \qquad B=\{(x,y) \in \mb{R}^2: \ y \geq |x| \tan \alpha-\e\}
\]
serves as a counterexample. The authors however conjectured that \eqref{GBM0} should be true for (centrally) symmetric convex bodies $A,B$.

One of the most important Brunn-Minkowski type inequalities for the Gaussian measure is Ehrhard's inequality, which states that for any two non-empty compact sets $A,B \subset \mb{R}^n$ and any $\lambda \in [0,1]$ we have
\begin{equation}\label{Ehr}
	\Phi^{-1}(\gamma_n(\lambda A + (1-\lambda)B)) \geq \lambda \Phi^{-1}(\gamma_n(A)) + (1-\lambda)\Phi^{-1}(\gamma_n(B)),
\end{equation}
where $\Phi(t)=\gamma_1((-\infty,t])$.
This inequality has been considered for the first time by Ehrhard in \cite{E}, where the author proved it assuming that both $A$ and $B$ are convex. Then Lata{\l}a in \cite{L1} generalized Ehrhard's result to the case of arbitrary $A$ and convex $B$. In its full  generality, the inequality \eqref{Ehr} has been established by Borell, \cite{B3} (see also \cite{Ba}). Note that \eqref{GBM0}  is an inequality of the same type, with $\Phi(t)$ replaced with $t^n$, but none of them is a direct consequence of the other.  The crucial property of  Ehrhard's inequality is that it (in fact a more general form where $\lambda$ and $1-\lambda$ are replaced with $\alpha$ and $\beta$, under the conditions $\alpha+\beta \geq 1$ and $|\alpha-\beta|\leq 1$) gives the Gaussian isoperimetry as a simple consequence.

In this paper, $\mc{K}$ denotes a family of sets closed under dilations, i.e., $A \in \mc{K}$ implies $tA \in \mc{K}$ for any $t \ge 0$. In particular, we assume that for any $A \in \mc{K}$ we have $0 \in A$. Classical families of such sets include the class of star-shaped bodies, the class of convex bodies containing the origin, the class of symmetric bodies and the class of unconditional bodies.

A general form of the Brunn-Minkowski inequality can be stated as follows.

\begin{defi}
We say that a Borel measure $\mu$ on $\mb{R}^n$ satisfies the Brunn-Minkowski inequality in the class of sets $\mc{K}$ if for any $A,B \in \mc{K}$ and for any $\lambda \in [0,1]$ we have
\begin{equation}\label{BM}
	\mu(\lambda A + (1-\lambda)B)^{1/n} \geq \lambda \mu(A)^{1/n} +   (1-\lambda) \mu(B)^{1/n}.
\end{equation}
\end{defi}

\noindent Before we state our results, we introduce some basic notation and definitions.
\begin{defi}\label{df}  $\frac{}{}$

\begin{enumerate}
\item We say that a function $f:\mb{R}^n \to \mb{R}$ is unconditional if for any choice of signs $\e_1,\ldots,\e_n \in \{-1,1\}$ and any $x =(x_1,\ldots,x_n)\in \mb{R}^n$ we have $f(\e_1x_1,\ldots,\e_n x_n)=f(x)$.
\item We say that an unconditional function is decreasing if for any $1 \leq i \leq n$ and any real numbers $x_1,\ldots,x_{i-1},x_{i+1},\ldots,x_n$ the function $$t \mapsto f(x_1,\ldots,x_{i-1},t,x_{i+1},\ldots,x_n)$$ is non-increasing on $[0,\infty)$.
\item A set $A \subseteq \mb{R}^n$ is called an \emph{ideal} if $\1_A$ is unconditional and decreasing. In other words, a set $A\subset \mb{R}^n$ is an ideal if $(x_1,\ldots,x_n)\in A$ implies $(\delta_1x_1,\ldots,\delta_nx_n) \in A$ for any choice of  $\delta_1,\ldots,\delta_n \in [-1,1]$.  The class of all ideals (in $\mb{R}^n$) will be denoted by $\mc{K}_I$.
\item A set $A \subseteq \mb{R}^n$ is called symmetric if $A=-A$. The class of all symmetric convex sets in $\mb{R}^n$ will be denoted by $\mc{K}_S$.
\item A measure $\mu$ on $\mb{R}^n$ is called unconditional  if it has an unconditional  density.
\end{enumerate}
\end{defi}

We note that the class of ideals contains the class of unconditional convex bodies, but it also contains some non-convex sets. For example,  $B_p^n=\{ x\in \mb{R}^n: \sum|x_i|^p \le 1\}$ for $p\in (0,1)$ are ideals.  We also note that if an unconditional  measure $\mu$ on $\mb{R}^n$ is a product measure, i.e. $\mu=\mu_1 \otimes \ldots \otimes \mu_n$, then the measures $\mu_i$ are even  on $\mb{R}$.

Our first theorem reads as follows.

\begin{thm}\label{BMgen}
Let $\mu$ be an unconditional product measure with decreasing density. Then $\mu$ satisfies the Brunn-Minkowski inequality in the class $\mc{K}_I$ of all ideals in $\mb{R}^n$.
\end{thm}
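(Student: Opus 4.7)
I would reduce the inequality to the classical Brunn--Minkowski inequality \eqref{clasBM} for Lebesgue measure via a coordinate-wise change of variables that linearizes each marginal density, using the ideal structure of $A,B$ to confine the whole problem to the positive orthant where those densities are monotonic.

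Write $\mu = \mu_1 \otimes \cdots \otimes \mu_n$ with the $i$-th marginal having even, non-increasing density $f_i$ on $[0,\infty)$, and for an ideal $X$ put $X^+ := X \cap [0,\infty)^n$. A straightforward sign-decomposition combining unconditionality of $\mu$ and of $X$ yields $\mu(X) = 2^n \int_{X^+} f_1(x_1)\cdots f_n(x_n)\,dx$. Set $F_i(x) := \int_0^x f_i(t)\,dt$; since $f_i$ is non-increasing on $[0,\infty)$, $F_i$ is non-decreasing and concave with $F_i(0)=0$. By a standard approximation (adding a small strictly positive non-increasing density and passing to the limit) we may assume each $f_i>0$, so that $F(x) := (F_1(x_1),\ldots,F_n(x_n))$ is a diffeomorphism of $[0,\infty)^n$ onto an open box. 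The change of variables $u_i = F_i(x_i)$ then identifies
\[
\mu(A) = 2^n\vol_n\bigl(F(A^+)\bigr), \quad \mu(B) = 2^n\vol_n\bigl(F(B^+)\bigr), \quad \mu(C) = 2^n\vol_n\bigl(F(C^+)\bigr),
\]
where $C := \lambda A + (1-\lambda)B$.

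The heart of the argument is the geometric inclusion
\[
\lambda F(A^+) + (1-\lambda) F(B^+) \subseteq F(C^+).
\]
Given $u = F(a)$ and $v = F(b)$ with $a \in A^+$, $b \in B^+$, put $c := \lambda a + (1-\lambda) b \in C^+$; concavity of each $F_i$ on $[0,\infty)$ gives $\lambda u_i + (1-\lambda) v_i \le F_i(c_i)$ for every $i$, i.e.\ $\lambda u + (1-\lambda)v \le F(c)$ coordinate-wise. Since $C$ is an ideal the entire box $[0,c]$ lies in $C^+$, and continuity plus monotonicity of each $F_i$ on $[0,c_i]$ allow us to pull any $z$ with $0 \le z \le F(c)$ componentwise back to some $y \in [0,c] \subseteq C^+$ with $F(y) = z$. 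This proves the inclusion.

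With the inclusion in hand, the classical Brunn--Minkowski inequality \eqref{clasBM} applied to $F(A^+)$ and $F(B^+)$ gives $\vol_n(F(C^+))^{1/n} \ge \lambda \vol_n(F(A^+))^{1/n} + (1-\lambda) \vol_n(F(B^+))^{1/n}$, and multiplying by $2$ yields the desired $\mu(C)^{1/n} \ge \lambda \mu(A)^{1/n} + (1-\lambda) \mu(B)^{1/n}$. I expect the geometric inclusion to be the only nontrivial step, since it is the one point where the ideal hypothesis (which makes $C^+$ coordinate-wise downward closed and so absorbs the slack in Jensen's inequality) must be combined with the decreasing-density hypothesis (which supplies the concavity of $F_i$); either assumption alone would be insufficient.
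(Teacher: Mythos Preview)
Your proof is correct and takes a genuinely different route from the paper's. The paper establishes a Pr\'ekopa--Leindler type functional inequality with an auxiliary parameter $p$ (Proposition~\ref{PL}) by induction on dimension, applies it to indicator functions to obtain Lemma~\ref{main}, and then optimizes over $p$. Your argument instead performs a coordinatewise monotone transport $F_i(x)=\int_0^x f_i$ that converts each marginal into Lebesgue measure, and reduces the statement in one stroke to the classical Brunn--Minkowski inequality via the inclusion $\lambda F(A^+)+(1-\lambda)F(B^+)\subseteq F(C^+)$.

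The two approaches exploit the same two facts---concavity of the cumulative marginals (equivalently, Lemma~\ref{dim1}) and downward-closure of ideals in the positive orthant---but package them differently. Your method is more elementary in that it invokes only \eqref{clasBM} rather than building a new functional inequality; on the other hand, the paper's route yields the sharper intermediate statement of Lemma~\ref{main} (with the free parameter $p$), which is of some independent interest. One point you use tacitly and should make explicit: $C=\lambda A+(1-\lambda)B$ is itself an ideal, since for $c=\lambda a+(1-\lambda)b$ and $\delta\in[-1,1]^n$ one has $(\delta_i c_i)_i=\lambda(\delta_i a_i)_i+(1-\lambda)(\delta_i b_i)_i\in\lambda A+(1-\lambda)B$; this is what justifies the line ``Since $C$ is an ideal the entire box $[0,c]$ lies in $C^+$.'' With that one sentence added, the argument is complete.
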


In addition, the Examples \ref{ex1} and  \ref{ex2} at the end of the paper show that  neither the assumption that $\mu$ is a product measure, nor the unconditionality of our sets $A$ and $B$ can be dropped.

In the second part of this article we provide a link between the Brunn-Minkowski inequality and the log-Brunn-Minkowski inequality.  To state our observation we need two definitions.

\begin{defi}
Let $\mc{K}$ be a class of subsets closed under dilations. We say that a family $\odot=(\odot_\lambda)_{\lambda \in [0,1]}$ of functions $\mc{K} \times \mc{K} \to \mc{K}$ is a geometric mean if for any $A,B \in \mc{K}$ the set $A \odot_\lambda B$ is measurable, satisfies an inclusion $A \odot_\lambda B \subseteq \lambda A + (1-\lambda)B$, and   $(sA)\odot_\lambda(tB)=s^\lambda t^{1-\lambda} (A \odot_\lambda B)$, for any $s,t > 0$.
\end{defi}

\begin{defi}
We say that a Borel measure $\mu$ on $\mb{R}^n$ satisfies the log-Brunn-Minkowski inequality in the class of sets $\mc{K}$ with a geometric mean $\odot$, if for any  sets $A,B \in \mc{K}$ and for any $\lambda \in [0,1]$ we have
\[
	\mu( A \odot_\lambda B  ) \geq  \mu(A)^{\lambda}   \mu(B)^{1-\lambda}.
\]
\end{defi}

\begin{rem}\label{r1}
We shall use two different geometric means. The first one is the geometric mean $\odot^S: \mc{K}_S \times \mc{K}_S \to \mc{K}_S$, defined by the formula

\[
	A \odot_\lambda^{S} B   = \{x \in \mb{R}^n: \ \scal{x}{u} \leq h_A^{\lambda}(u) h_{B}^{1-\lambda}(u), \ \forall u \in S^{n-1} \}.
\]
Here $h_A$ is the support function of $A$, i.e., $h_A(u)=\sup_{x \in A} \scal{x}{u}$ (see, \cite{G2}, \cite{Sn}).

The second mean $\odot^I: \mc{K}_I \times \mc{K}_I \to \mc{K}_I$ is defined by
\[
	A \odot_\lambda^I B = \bigcup_{x \in A, y \in B} [-|x_1|^\lambda |y_1|^{1-\lambda}, |x_1|^\lambda |y_1|^{1-\lambda}] \times \ldots \times [-|x_n|^\lambda |y_n|^{1-\lambda}, |x_n|^\lambda |y_n|^{1-\lambda}].
\]

It is straightforward to check, with the help of the inequality $a^\lambda b^{1-\lambda} \leq \lambda a +(1-\lambda)b$, $a,b \geq 0$, that both means are indeed geometric.
\end{rem}

\noindent In the Section \ref{logBM->BM} we prove the following proposition.

\begin{prop}\label{prop}
Suppose that a Borel measure $\mu$ with a radially decreasing density $f$, i.e. density satisfying $f(tx) \geq f(x)$ for any $x \in \mb{R}^n$ and $t \in [0,1]$, satisfies the log-Brunn-Minkowski inequality, with a geometric mean $\odot$, in a certain class of sets $\mc{K}$. Then $\mu$ satisfies the Brunn-Minkowski inequality in the class $\mc{K}$.
\end{prop}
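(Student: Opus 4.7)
The plan is to imitate the classical derivation of the $1/n$-homogeneous Brunn-Minkowski inequality from its dimension-free form, where the homogeneity $|rK|=r^n|K|$ of the Lebesgue measure is replaced by the conjunction of two ingredients: the homogeneity axiom of the geometric mean $\odot$, and the radial monotonicity of $f$.

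Fix $A,B \in \mc{K}$ of positive measure and $\lambda\in[0,1]$. Set
\[
a=\mu(A)^{1/n}, \qquad b=\mu(B)^{1/n}, \qquad c=\lambda a+(1-\lambda)b, \qquad \lambda'=\lambda a/c,
\]
so that $1-\lambda'=(1-\lambda)b/c$. A direct rewrite yields $\lambda A+(1-\lambda)B = c\bigl(\lambda'(A/a)+(1-\lambda')(B/b)\bigr)$. Applying the containment axiom of $\odot$ to $A/a$ and $B/b$, and then the homogeneity rule $(sA)\odot_{\lambda'}(tB)=s^{\lambda'}t^{1-\lambda'}(A\odot_{\lambda'}B)$, gives the key inclusion
\[
\lambda A+(1-\lambda)B \;\supseteq\; \rho\,(A\odot_{\lambda'}B), \qquad \text{where }\rho=c\,a^{-\lambda'}b^{-(1-\lambda')}.
\]

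The main technical point is the inequality $\rho\le 1$. Taking logarithms it reduces to $c\log c\le \lambda a\log a+(1-\lambda)b\log b$, which is exactly the convexity of the function $x\mapsto x\log x$ evaluated at the convex combination $c=\lambda a+(1-\lambda)b$. With $\rho\le 1$ in hand, the hypothesis that $f$ is radially decreasing implies $\mu(\rho K)\ge \rho^n\mu(K)$ for every measurable $K$ (after the substitution $y=\rho x$, this is the pointwise inequality $f(\rho x)\ge f(x)$). Applying this with $K=A\odot_{\lambda'}B$ and then the hypothesized log-Brunn-Minkowski inequality for $A,B$ at parameter $\lambda'$ gives
\[
\mu(\lambda A+(1-\lambda)B) \;\ge\; \rho^n\mu(A\odot_{\lambda'}B) \;\ge\; \rho^n\mu(A)^{\lambda'}\mu(B)^{1-\lambda'} \;=\; \rho^n a^{n\lambda'} b^{n(1-\lambda')} \;=\; c^n,
\]
where the last equality is the identity $\rho\cdot a^{\lambda'}b^{1-\lambda'}=c$ built into the definition of $\rho$. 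Extracting $n$-th roots gives the Brunn-Minkowski inequality.

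The only non-routine step is the estimate $\rho\le 1$, and it amounts to a one-line use of the convexity of $x\log x$. Everything else is bookkeeping: the specific choice $\lambda'=\lambda a/c$ is precisely what is needed so that the powers of $a$ and $b$ produced by the homogeneity of $\odot$ and by the log-Brunn-Minkowski inequality telescope exactly into $c^n=(\lambda\mu(A)^{1/n}+(1-\lambda)\mu(B)^{1/n})^n$. The degenerate cases $\mu(A)=0$ or $\mu(B)=0$ are handled directly using $0\in A\cap B$ and the bound $\mu(rK)\ge r^n\mu(K)$ for $r\le 1$.
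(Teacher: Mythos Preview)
Your proof is correct and is essentially the same as the paper's. The only difference is presentational: you fix the optimal parameter $\lambda'=\lambda a/c$ at the outset and rescale to $A/a,\,B/b$, whereas the paper carries a free parameter $p$, derives $\mu(\lambda A+(1-\lambda)B)\ge \bigl[(\lambda/p)^p((1-\lambda)/(1-p))^{1-p}\bigr]^n\mu(A)^p\mu(B)^{1-p}$, and then substitutes $p=\lambda a/c$; with that choice the paper's scaling factor $t=(\lambda/p)^p((1-\lambda)/(1-p))^{1-p}$ equals your $\rho$, and its ``concavity of the logarithm'' justification for $t\le 1$ is exactly your convexity of $x\log x$.
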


B\"or\"oczky,  Lutwak,  Yang and  Zhang \cite{BLYZ1}, proved the log-Brunn-Minkowski inequality for the Lebesgue measure and symmetric convex bodies  on  $\mb{R}^2$ equipped  with geometric mean $\odot^S$.  Saroglou \cite{S2}, generalized the inequality to the case of  measures with even log-concave densities  on  $\mb{R}^2$  (see Corollary 3.3 therein). Thus, as a consequence of Proposition \ref{prop} and Remark \ref{r1}, we get the following theorem.

\begin{thm}\label{symBM}
Let $\mu$ be a measure on $\mb{R}^2$ with an even log-concave density. Then $\mu$ satisfies the Brunn-Minkowski inequality in the class $\mc{K}_S$ of all symmetric convex sets in $\mb{R}^2$.
\end{thm}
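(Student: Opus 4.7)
The plan is to deduce Theorem \ref{symBM} as an immediate corollary of Proposition \ref{prop} combined with Saroglou's log-Brunn-Minkowski inequality (Corollary 3.3 of \cite{S2}). Two ingredients need to be in place: first, that the density $f$ of $\mu$ is radially decreasing in the sense $f(tx) \geq f(x)$ for all $t \in [0,1]$ and $x \in \mb{R}^2$; and second, that $\mu$ satisfies the log-Brunn-Minkowski inequality on $\mc{K}_S$ with respect to the geometric mean $\odot^S$ from Remark \ref{r1}. Once both are established, applying Proposition \ref{prop} with $\mc{K}=\mc{K}_S$ and $\odot=\odot^S$ finishes the proof.

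For the first ingredient I would write $f = e^{-V}$ with $V\colon \mb{R}^2 \to \mb{R}\cup\{+\infty\}$ convex and even. Evenness combined with convexity forces $V$ to attain its minimum at the origin: indeed
\[
  V(0) = V\!\left(\tfrac{1}{2}x + \tfrac{1}{2}(-x)\right) \leq \tfrac{1}{2}V(x) + \tfrac{1}{2}V(-x) = V(x).
\]
Consequently, for $t \in [0,1]$, convexity of $V$ gives
\[
  V(tx) = V\!\bigl(tx + (1-t)\cdot 0\bigr) \leq tV(x) + (1-t)V(0) \leq V(x),
\]
so $f(tx) = e^{-V(tx)} \geq e^{-V(x)} = f(x)$, which is the radially decreasing property demanded by Proposition \ref{prop}.

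For the second ingredient I would simply quote Saroglou's extension of the B\"or\"oczky--Lutwak--Yang--Zhang planar log-Brunn-Minkowski inequality: for any measure on $\mb{R}^2$ with even log-concave density and any symmetric convex bodies $A,B \in \mc{K}_S$,
\[
  \mu\bigl(A \odot_\lambda^S B\bigr) \geq \mu(A)^{\lambda}\,\mu(B)^{1-\lambda}, \qquad \lambda \in [0,1].
\]
Feeding this together with the radial monotonicity into Proposition \ref{prop} (applied with $n=2$) yields
\[
  \mu\bigl(\lambda A + (1-\lambda)B\bigr)^{1/2} \geq \lambda\mu(A)^{1/2} + (1-\lambda)\mu(B)^{1/2},
\]
which is exactly the claim. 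In this argument there is no serious obstacle: the deep content is entirely packaged inside Proposition \ref{prop} and the cited log-Brunn-Minkowski inequality, and the only verification left to perform is the one-line observation that an even log-concave density is radially decreasing.
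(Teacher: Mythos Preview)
Your proposal is correct and matches the paper's own argument exactly: the paper states Theorem \ref{symBM} as an immediate consequence of Proposition \ref{prop} (with $\odot = \odot^S$) together with Saroglou's planar log-Brunn-Minkowski inequality for even log-concave measures. Your explicit check that an even log-concave density is radially decreasing is the only detail the paper leaves implicit, and it is handled correctly.
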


\noindent Moreover, in \cite{CFM} (Proposition 8, see also Proposition 4.2 in \cite{S1}) the authors proved the following fact.

\begin{thm}\label{uncondlogBM}
The log-Brunn-Minkowski inequality holds true with the geometric mean $\odot^I$ for any measure with unconditional log-concave density in the class $\mc{K}_I$ of all ideals in $\mb{R}^n$.
\end{thm}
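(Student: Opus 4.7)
The plan is to use unconditionality to reduce the problem to the open positive orthant $\mb{R}_{>0}^n$, then apply the logarithmic change of variables $\Phi(X) = (e^{X_1},\dots,e^{X_n})$, which transforms the coordinate-wise geometric mean $\odot^I$ into an ordinary Minkowski sum and, crucially, an unconditional log-concave density into a log-concave weight on all of $\mb{R}^n$. A direct application of Prékopa--Leindler then delivers the multiplicative inequality. The single non-trivial step in this scheme is verifying the log-concavity of the pullback weight, which will rely on an unconditional convex function being coordinate-wise non-decreasing on the positive orthant.

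To carry this out, I would first set $A_+ = A \cap \mb{R}_{>0}^n$ and note that unconditionality of $\mu$ yields $\mu(A) = 2^n \mu(A_+)$, with analogous formulas for $B$ and for $A \odot_\lambda^I B$. Since $2^n = (2^n)^\lambda (2^n)^{1-\lambda}$, the theorem reduces to showing
\[
\mu((A \odot_\lambda^I B)_+) \geq \mu(A_+)^\lambda \, \mu(B_+)^{1-\lambda}.
\]
Put $\tilde A = \Phi^{-1}(A)$ and $\tilde B = \Phi^{-1}(B)$. The ideal property translates into $\tilde A, \tilde B$ being \emph{downward closed}: if $X \in \tilde A$ and $X' \leq X$ coordinate-wise, then $X' \in \tilde A$. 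A direct unwinding of the definition of $\odot_\lambda^I$ then gives
\[
\Phi^{-1}((A \odot_\lambda^I B)_+) = \lambda \tilde A + (1-\lambda)\tilde B,
\]
where the non-trivial inclusion uses downward closedness to rewrite any $Z \leq \lambda X + (1-\lambda)Y$ with $X \in \tilde A,\, Y \in \tilde B$ as $\lambda X' + (1-\lambda) Y$ for some $X' \leq X$ still in $\tilde A$.

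The crux of the argument is the log-concavity of the weight
\[
F(X) := f(\Phi(X))\, e^{X_1 + \cdots + X_n},
\]
where $f = e^{-V}$ is the density of $\mu$. Because the $\sum X_i$ term is linear, this reduces to convexity of $X \mapsto V(e^{X_1},\dots,e^{X_n})$. For each coordinate $i$, the slice $t \mapsto V(\dots,t,\dots)$ is convex and even, hence non-decreasing on $[0,\infty)$; thus $V$ restricted to $\mb{R}_{\geq 0}^n$ is coordinate-wise non-decreasing in addition to being convex. Combining this monotonicity with the convexity of the exponential yields the two-step estimate
\[
V(e^{\lambda X + (1-\lambda)Y}) \leq V(\lambda e^{X} + (1-\lambda)e^{Y}) \leq \lambda V(e^{X}) + (1-\lambda) V(e^{Y}),
\]
which is exactly the log-concavity of $F$. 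This composition is where the hypotheses of unconditionality and log-concavity are both used in an essential way, and it is the step I expect to be the main obstacle on a first attempt.

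Finally, the change of variables formula gives $\mu(A_+) = \int_{\tilde A} F(X)\,dX$, and similarly for $B_+$ and $(A \odot_\lambda^I B)_+$. Prékopa--Leindler applied to the triple $F\cdot\1_{\tilde A},\; F\cdot\1_{\tilde B},\; F\cdot\1_{\lambda\tilde A + (1-\lambda)\tilde B}$ — whose defining pointwise inequality is immediate from the log-concavity of $F$ together with the Minkowski-sum identification above — produces
\[
\mu((A \odot_\lambda^I B)_+) \geq \mu(A_+)^{\lambda} \, \mu(B_+)^{1-\lambda},
\]
and multiplying through by $2^n$ yields the theorem.
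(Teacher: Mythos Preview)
Your argument is correct and follows essentially the same route as the paper's proof: reduce to the positive orthant by unconditionality, pull back via the coordinate-wise exponential so that $\odot_\lambda^I$ becomes the Minkowski combination, use the two-step estimate (monotonicity of $V$ on the positive orthant from evenness plus convexity, then convexity of $V$) to obtain log-concavity of the pulled-back weight, and conclude with Pr\'ekopa--Leindler. Your write-up is somewhat more explicit about the set identity $\Phi^{-1}((A \odot_\lambda^I B)_+) = \lambda \tilde A + (1-\lambda)\tilde B$ and the role of downward closedness, but the substance is the same.
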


\noindent For the sake of completeness, we recall the argument in Section \ref{logBM->BM}. As a consequence, applying our Proposition \ref{prop} together with Remark \ref{r1}, we deduce:

\begin{thm}\label{uncondBM}
Let $\mu$ be an unconditional log-concave measure on $\mb{R}^n$. Then $\mu$ satisfies the Brunn-Minkowski inequality in the class $\mc{K}_I$ of all ideals in $\mb{R}^n$.
\end{thm}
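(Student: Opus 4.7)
The plan is to derive Theorem \ref{uncondBM} as a direct combination of Theorem \ref{uncondlogBM} and Proposition \ref{prop}, together with Remark \ref{r1}. Since Theorem \ref{uncondlogBM} provides the log-Brunn-Minkowski inequality with the geometric mean $\odot^I$ on $\mc{K}_I$, and Proposition \ref{prop} upgrades a log-Brunn-Minkowski inequality (with any geometric mean) to the standard Brunn-Minkowski inequality as long as the density is radially decreasing, the only substantive thing I need to check is that an unconditional log-concave density is automatically radially decreasing.

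First I would verify this radial monotonicity. Let $f$ be an unconditional log-concave density and fix $x \in \mb{R}^n$. By unconditionality, $f(-x)=f(x)$, so log-concavity applied to $0=\tfrac{1}{2}x+\tfrac{1}{2}(-x)$ gives
\[
  f(0) \geq f(x)^{1/2} f(-x)^{1/2} = f(x).
\]
Next, for $t \in [0,1]$, writing $tx = (1-t)\cdot 0 + t \cdot x$ and again using log-concavity,
\[
  f(tx) \geq f(0)^{1-t} f(x)^{t} \geq f(x)^{1-t} f(x)^{t} = f(x),
\]
so $f$ is radially decreasing in the sense of Proposition \ref{prop}.

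Then I would invoke Theorem \ref{uncondlogBM}, which tells us that for $A,B \in \mc{K}_I$ and $\lambda \in [0,1]$,
\[
  \mu(A \odot_\lambda^I B) \geq \mu(A)^\lambda \mu(B)^{1-\lambda}.
\]
By Remark \ref{r1}, the operation $\odot^I$ is indeed a geometric mean on $\mc{K}_I$ in the sense of the definition (it is contained in the Minkowski interpolation and is homogeneous of the required bi-degree). Hence all hypotheses of Proposition \ref{prop} are met for the class $\mc{K} = \mc{K}_I$.

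Finally, applying Proposition \ref{prop} yields
\[
  \mu(\lambda A + (1-\lambda)B)^{1/n} \geq \lambda \mu(A)^{1/n} + (1-\lambda)\mu(B)^{1/n}
\]
for all $A,B \in \mc{K}_I$ and $\lambda \in [0,1]$, which is exactly the assertion. The only place where something could go wrong is the radial monotonicity verification, but log-concavity plus symmetry in each coordinate forces the origin to be a maximum and every ray from the origin to be decreasing in $f$, so no real obstacle arises; the theorem is a short corollary of the two results quoted above.
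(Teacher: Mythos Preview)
Your proposal is correct and follows exactly the paper's route: Theorem \ref{uncondBM} is deduced by combining Theorem \ref{uncondlogBM} with Proposition \ref{prop} via Remark \ref{r1}. The only addition is your explicit verification that an unconditional log-concave density is radially decreasing, which the paper leaves implicit; your argument for this is correct.
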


The rest of this article is organized as follows. In the next section we present the proof of Theorem \ref{BMgen}.  In Section \ref{logBM->BM} we prove Proposition \ref{prop} and recall the proof of Theorem \ref{uncondlogBM}. In Section \ref{apl} we present applications of the above results. In the last section we discuss equality cases in Theorem \ref{symBM} and Theorem \ref{uncondBM}. We  also give examples showing optimality of Theorem \ref{main} and state some open questions.

\section{Proof of Theorem \ref{BMgen}}

Our strategy is to prove a certain functional version of \eqref{BM}. A functional version of the classical Brunn-Minkowski inequality is called the Pr\'ekopa-Leindler inequality, see \cite{G1} for the proof.

\vspace{0.3cm}
\noindent{\bf Pr\'ekopa-Leindler inequality,} \cite{P}, \cite{Le}:
Let $f,g,m$ be non-negative measurable functions on $\mb{R}^n$ and let $\lambda \in [0,1]$. If for all $x,y \in \mb{R}^n$ we have $m(\lambda x + (1-\lambda )y) \geq f(x)^\lambda g(y)^{1-\lambda}$ then
\[
	\call{}{}{m}{x} \geq \left( \call{}{}{f}{x} \right)^\lambda \left( \call{}{}{g}{x} \right)^{1-\lambda}.
\]

\noindent Here we prove a version of the above inequality under the assumption of unconditionality of functions $f,g$ and $m$.

\begin{prop}\label{PL}
Fix $\lambda,p \in (0,1)$. Suppose that $m,f,g$ are unconditional decreasing non-negative functions and let $\mu$ be an unconditional product measure with decreasing density on $\mb{R}^n$. Assume that for any $x,y \in \mb{R}^n$ we have
\[
	m(\lambda x + (1-\lambda)y) \geq f(x)^p g(y)^{1-p}.
\]
Then
\[
	\call{}{}{m}{\mu} \geq  \left[\left( \frac{\lambda}{p} \right)^{p}  \left( \frac{1-\lambda}{1-p} \right)^{1-p} \right]^n \left( \call{}{}{f}{\mu} \right)^p \left( \call{}{}{g}{\mu} \right)^{1-p}.
\]
\end{prop}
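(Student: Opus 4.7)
The plan is to prove the statement by induction on the dimension $n$, with the one-dimensional case carrying all the real content and the inductive step being a routine Fubini argument.

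For the base case $n=1$, I would first normalize so that $f(0) = g(0) = 1$. Since an unconditional decreasing function on $\mb{R}$ attains its supremum at the origin, this is achieved by scaling $f$ and $g$ by positive constants and absorbing the factor $f(0)^p g(0)^{1-p}$ into $m$; the hypothesis is preserved, and the normalized $m$ satisfies $m(0) \geq 1$. The level sets $\{f \geq t\} = [-\alpha(t), \alpha(t)]$, $\{g \geq t\} = [-\beta(t), \beta(t)]$, $\{m \geq t\} = [-\gamma(t), \gamma(t)]$ are then symmetric intervals for $t \in (0,1]$, and the layer cake formula yields
\[
\int f\, d\mu = \int_0^1 H(\alpha(t))\, dt, \quad \int g\, d\mu = \int_0^1 H(\beta(t))\, dt, \quad \int m\, d\mu \geq \int_0^1 H(\gamma(t))\, dt,
\]
where $H(s) := \mu([-s,s]) = 2\int_0^s \varphi$. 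The key structural fact is that $H$ is concave on $[0, \infty)$, since $H'= 2\varphi$ is non-increasing.

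I would then apply the hypothesis with the symmetric choice $a=b=t$ (note $a^p b^{1-p}=t$): the level-set inclusion forces $\gamma(t) \geq \lambda \alpha(t) + (1-\lambda)\beta(t)$. Concavity of $H$ gives $H(\gamma(t)) \geq \lambda H(\alpha(t)) + (1-\lambda) H(\beta(t))$; integrating in $t$ produces
\[
\int m\, d\mu \geq \lambda \int f\, d\mu + (1-\lambda) \int g\, d\mu.
\]
The mismatch between the Minkowski weight $\lambda$ and the geometric-mean weight $p$ is then absorbed by weighted AM-GM: setting $u = (\lambda/p)\int f\, d\mu$ and $v = ((1-\lambda)/(1-p)) \int g\, d\mu$, the inequality $pu + (1-p) v \geq u^p v^{1-p}$ yields the advertised bound in dimension one, with constant $C := (\lambda/p)^p ((1-\lambda)/(1-p))^{1-p}$.

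For the induction, write $\mu = \mu' \otimes \mu_n$ on $\mb{R}^{n-1} \times \mb{R}$ and introduce the partial integrals $M(x_n) = \int m(x', x_n)\, d\mu'(x')$, together with the analogous $F(x_n), G(y_n)$. Unconditionality and monotonicity in the last coordinate are preserved under integration, so $M, F, G$ are unconditional and decreasing on $\mb{R}$. Applying the inductive hypothesis to the sliced functions $m(\cdot, z_n), f(\cdot, x_n), g(\cdot, y_n)$ with $z_n = \lambda x_n + (1-\lambda) y_n$ gives $M(\lambda x_n + (1-\lambda) y_n) \geq C^{n-1} F(x_n)^p G(y_n)^{1-p}$. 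I would absorb this prefactor by replacing $F$ with $C^{(n-1)/p}F$; the already-established $n=1$ case, applied to $\mu_n$, contributes one more factor of $C$, and Fubini recombines the integrals to deliver the total factor of $C^n$.

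I expect the main subtlety to be the choice of level-set matching. It is natural to try the non-symmetric matching $a = t^{\lambda/p}, b = t^{(1-\lambda)/(1-p)}$ that exactly respects the geometric-mean weights, but this leads to awkward weighted integrals on $[0,1]$. The symmetric choice $a=b=t$ pays the constant-factor price (precisely $C$) while reducing the problem to a clean one-dimensional Brunn--Minkowski-style inequality for $\mu$ coming from concavity of $H$, after which the sharp constant $C^n$ falls out of a separate weighted AM-GM.
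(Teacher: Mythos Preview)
Your proposal is correct and is essentially the same proof as the paper's: both argue by induction on $n$, handle the base case by normalizing to $\|f\|_\infty=\|g\|_\infty=1$, using the level-set inclusion $\lambda\{f>t\}+(1-\lambda)\{g>t\}\subseteq\{m>t\}$ together with the concavity of $s\mapsto\mu([-s,s])$ (the paper isolates this as a separate lemma) to get $\int m\,d\mu\geq\lambda\int f\,d\mu+(1-\lambda)\int g\,d\mu$, and then extracting the constant $(\lambda/p)^p((1-\lambda)/(1-p))^{1-p}$ via weighted AM--GM; the inductive step is the same Fubini argument on the product structure. The only cosmetic differences are that the paper splits off the first coordinate rather than the last and absorbs the $C^{n-1}$ factor into $M$ rather than into $F$.
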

\noindent The above proposition  allows us to prove the following lemma, which is in fact a reformulation of Theorem \ref{BMgen}.

\begin{lem}\label{main}
Let $A,B$ be ideals in $\mb{R}^n$ and let $\mu$ be an unconditional product measure with decreasing density on $\mb{R}^n$. Then for any $\lambda \in [0,1]$ and $p \in (0,1)$ we have
\[
	\mu(\lambda A + (1-\lambda) B) \geq \left[\left( \frac{\lambda}{p} \right)^{p}  \left( \frac{1-\lambda}{1-p} \right)^{1-p} \right]^n \mu(A)^{p} \mu(B)^{1-p}.
\]
\end{lem}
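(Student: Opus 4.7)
The plan is to deduce Lemma \ref{main} as a direct application of Proposition \ref{PL} with the indicator functions $f=\mathbf{1}_A$, $g=\mathbf{1}_B$, and $m=\mathbf{1}_{\lambda A+(1-\lambda)B}$. Modulo a short structural check about ideals, this immediately yields the desired inequality, since $\int f\,\mathrm{d}\mu=\mu(A)$, and similarly for $g$ and $m$.

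First, I would verify the three unconditionality hypotheses of Proposition \ref{PL}. Since $A$ and $B$ are ideals, $\mathbf{1}_A$ and $\mathbf{1}_B$ are visibly unconditional and decreasing. The one nontrivial point is that $C:=\lambda A+(1-\lambda)B$ must itself be an ideal; I would verify this by writing $z=\lambda a+(1-\lambda)b$ with $a\in A$, $b\in B$, and observing that for any $\delta_1,\dots,\delta_n\in[-1,1]$,
\[
(\delta_1 z_1,\dots,\delta_n z_n)=\lambda(\delta_1 a_1,\dots,\delta_n a_n)+(1-\lambda)(\delta_1 b_1,\dots,\delta_n b_n),
\]
and the two tuples on the right lie in $A$ and $B$ respectively by the ideal property. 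Hence $C$ is an ideal and $\mathbf{1}_C$ is unconditional and decreasing.

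Next I would check the pointwise hypothesis $m(\lambda x+(1-\lambda)y)\ge f(x)^p g(y)^{1-p}$ for all $x,y\in\mathbb{R}^n$. If $f(x)=0$ or $g(y)=0$ the right-hand side vanishes and there is nothing to prove; otherwise $x\in A$ and $y\in B$, so $\lambda x+(1-\lambda)y\in C$ and both sides equal $1$. Applying Proposition \ref{PL} then gives exactly
\[
\mu(\lambda A+(1-\lambda)B)\geq\left[\left(\tfrac{\lambda}{p}\right)^{p}\left(\tfrac{1-\lambda}{1-p}\right)^{1-p}\right]^n\mu(A)^{p}\mu(B)^{1-p}.
\]

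Finally, I would dispose of the boundary cases $\lambda\in\{0,1\}$, which are excluded from Proposition \ref{PL} as stated. For $\lambda\in\{0,1\}$ and $p\in(0,1)$ the factor $(\lambda/p)^{p}(1-\lambda)/(1-p))^{1-p}$ equals $0$, so the asserted inequality is trivially satisfied. I do not expect any genuine obstacle here: once Proposition \ref{PL} is in hand, the lemma is essentially a one-line consequence, with the only content being the easy closure-under-coordinate-contraction check for $\lambda A+(1-\lambda)B$.
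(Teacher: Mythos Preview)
Your proposal is correct and follows exactly the same approach as the paper: apply Proposition \ref{PL} with $f=\mathbf{1}_A$, $g=\mathbf{1}_B$, $m=\mathbf{1}_{\lambda A+(1-\lambda)B}$, noting that these are unconditional decreasing and satisfy the pointwise hypothesis, then conclude. Your write-up is in fact more careful than the paper's, which simply asserts that $m$ is ``clearly'' unconditional and decreasing without spelling out that $\lambda A+(1-\lambda)B$ is again an ideal; you also explicitly dispose of the degenerate endpoints $\lambda\in\{0,1\}$.
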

It is worth noticing that the factor on the right hand side of this inequality replaces in some sense the lack of homogeneity of our measure $\mu$. The main idea of the proof is to introduce an additional parameter $p \ne \lambda$ and do the optimization with respect to $p$.

We first show how Lemma \ref{main} implies Theorem \ref{BMgen}.

\begin{proof}[Proof of Theorem \ref{BMgen}]
Without loss of generality we assume that $\lambda \in (0,1)$. Let us assume for a moment that $\mu(A)\mu(B)>0$. Then we can use Lemma \ref{main} with
\begin{equation}\label{p}
	p = \frac{\lambda \mu(A)^{1/n}}{\lambda \mu(A)^{1/n} + (1-\lambda) \mu(B)^{1/n}} \in (0,1).
\end{equation}
Note that
\[
	 \frac{\lambda}{p} =  \frac{\lambda \mu(A)^{1/n} + (1-\lambda) \mu(B)^{1/n}}{ \mu(A)^{1/n}}, \qquad  \frac{1-\lambda}{1-p} =  \frac{\lambda \mu(A)^{1/n} + (1-\lambda) \mu(B)^{1/n}}{ \mu(B)^{1/n}}.
\]
Then
\begin{align*}
\left[\left( \frac{\lambda}{p} \right)^{p}  \left( \frac{1-\lambda}{1-p} \right)^{1-p} \right]^n \mu(A)^{p} \mu(B)^{1-p} = \left( \lambda \mu(A)^{1/n} + (1-\lambda) \mu(B)^{1/n} \right)^n.
\end{align*}
Thus the inequality in Lemma \ref{main} becomes
\[
	\mu(\lambda A + (1-\lambda) B) \geq \left( \lambda \mu(A)^{1/n} + (1-\lambda) \mu(B)^{1/n} \right)^n.
\]

Now suppose that, say, $\mu(B)=0$. Since $B$ is a non-empty ideal, we have $0 \in B$. Therefore, $\lambda A \subseteq \lambda A + (1-\lambda) B$. Let $\vp$ be the unconditional decreasing density of $\mu$. Hence,
\begin{align*}
	\mu(\lambda A + (1-\lambda) B) & \geq \mu(\lambda A) = \call{\lambda A}{}{\vp(x)}{x} = \lambda ^n \call{A}{}{\vp(\lambda y)}{y} \\
	& = \lambda ^n \call{A}{}{\vp(\lambda y_1, \ldots, \lambda y_n)}{y} = \lambda ^n \call{A}{}{\vp(\lambda |y_1|, \ldots, \lambda |y_n|)}{y} \\
	& \geq  \lambda ^n \call{A}{}{\vp( |y_1|, \ldots,  |y_n|)}{y} = \lambda^n \mu(A).
\end{align*}
Therefore,
\[
	\mu(\lambda A + (1-\lambda) B)^{1/n} \geq \lambda \mu(A)^{1/n} = \lambda \mu(A)^{1/n} + (1-\lambda) \mu(B)^{1/n}.
\]
\end{proof}

Next we show that Proposition \ref{PL} implies Lemma \ref{main}.

\begin{proof}[Proof of Lemma \ref{main}]
We can assume that $\lambda \in (0,1)$. Let us take $m(x)=\1_{\lambda A + (1-\lambda)B}(x)$, $f(x)=\1_A(x)$, $g(x)=\1_B(x)$. Clearly, $f,g$ and $m$ are unconditional and decreasing, and verify $m(\lambda x + (1-\lambda )y) \geq f(x)^pg(y)^{1-p}$ for any $p \in (0,1)$. Our assertion follows from Proposition \ref{PL}.
\end{proof}

\noindent For the proof of Proposition \ref{PL} we need a one dimensional Brunn-Minkowski inequality for unconditional measures.

\begin{lem}\label{dim1}
Let $A,B$ be two symmetric intervals and let $\mu$ be an unconditional measure with decreasing density on $\mb{R}$. Then for any $\lambda \in [0,1]$ we have
\[
	\mu(\lambda A + (1-\lambda)B) \geq \lambda \mu(A)+(1-\lambda) \mu(B).
\]	
\end{lem}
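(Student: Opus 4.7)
The plan is to reduce the statement to concavity of a one-variable function. Write the two symmetric intervals as $A = [-a,a]$ and $B = [-b,b]$ with $a,b \ge 0$ (allowing $+\infty$ if needed). Since the Minkowski combination of symmetric intervals is again a symmetric interval, we have
\[
    \lambda A + (1-\lambda) B = [-c, c], \qquad c = \lambda a + (1-\lambda) b.
\]
Let $\varphi$ denote the unconditional decreasing density of $\mu$ and define, for $t \ge 0$,
\[
    F(t) = \mu([-t,t]) = 2 \int_0^t \varphi(s) \, \mathrm{d} s.
\]
Then $\mu(A) = F(a)$, $\mu(B) = F(b)$, and $\mu(\lambda A + (1-\lambda) B) = F(c)$, so the claim is exactly
\[
    F(\lambda a + (1-\lambda) b) \ge \lambda F(a) + (1-\lambda) F(b).
\]

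This is nothing but concavity of $F$ on $[0,\infty)$. Since $\varphi$ is unconditional and decreasing in the sense of Definition \ref{df}, the restriction of $\varphi$ to $[0,\infty)$ is non-increasing. By the fundamental theorem of calculus $F'(t) = 2\varphi(t)$ almost everywhere, so $F'$ is non-increasing on $[0,\infty)$, i.e.\ $F$ is concave there. The desired inequality then follows from the definition of concavity applied to the points $a, b \in [0,\infty)$ with weights $\lambda, 1-\lambda$.

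There is essentially no obstacle here; the only mildly delicate point is the observation that symmetry of both intervals is what forces the endpoint of $\lambda A + (1-\lambda)B$ to coincide with $\lambda a + (1-\lambda) b$ (for asymmetric intervals one would only get the weaker statement via the classical one-dimensional Brunn-Minkowski inequality, which does not interact well with a non-homogeneous measure). Unconditionality of $\mu$ together with symmetry of $A,B$ thus collapses the problem to a one-dimensional concavity statement, and the monotonicity hypothesis on the density is precisely what delivers that concavity.
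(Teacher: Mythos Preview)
Your proof is correct and follows essentially the same route as the paper: write $A=[-a,a]$, $B=[-b,b]$, observe that $\lambda A+(1-\lambda)B=[-(\lambda a+(1-\lambda)b),\lambda a+(1-\lambda)b]$, and reduce the inequality to concavity of $t\mapsto \int_0^t\varphi(s)\,\mathrm{d}s$, which follows from $\varphi$ being non-increasing on $[0,\infty)$.
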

\begin{proof}
We can assume that $A=[-a,a]$ and $B=[-b,b]$ for some $a,b>0$. Let $\vp$ be the density of $\mu$. Then our assertion is equivalent to
\[
	\call{0}{\lambda a + (1-\lambda) b}{\vp(x)}{x} \geq  \lambda \call{0}{ a}{\vp(x)}{x} + (1-\lambda)\call{0}{ b}{\vp(x)}{x}.
\]
In other words, the function $t \mapsto \call{0}{t}{\vp(x)}{x}$ should be concave on $[0,\infty)$. This is equivalent to $t \mapsto \vp(t)$ being non-increasing on $[0,\infty)$.
\end{proof}

\begin{proof}[Proof of Proposition \ref{PL}]
We proceed by induction on $n$. Let us begin with the case $n=1$. We can assume that $\norma{f}{\infty},\norma{g}{\infty}>0$. If we multiply the functions $m,f,g$ by positive numbers $c_m, c_f, c_g$ satisfying $c_m=c_f^p c_g^{1-p}$, the hypothesis and the assertion do not change. Therefore, taking $c_f= \norma{f}{\infty}^{-1}$, $c_g= \norma{g}{\infty}^{-1}$, $c_m= \norma{f}{\infty}^{-p} \norma{g}{\infty}^{-(1-p)}$ we can assume that $\norma{f}{\infty}=\norma{g}{\infty}=1$. Then the sets $\{f > t\}$ and $\{g > t\}$ are non-empty for $t \in (0,1)$. Moreover, $\lambda\{f> t\}+(1-\lambda)\{g > t\} \subseteq \{m > t\}$. Indeed, if $x \in \{f > t\}$ and $y \in \{g > t\}$ then $m(\lambda x+(1-\lambda)y) \geq f(x)^pg(y)^{1-p} > t^pt^{1-p}=t$. Thus, $\lambda x+(1-\lambda)y \in \{m > t\}$. Therefore, using Lemma \ref{dim1}, we get
\begin{align*}
 \call{}{}{m}{\mu} = \call{0}{\infty}{\mu(\{m>t\})}{t} \geq & \call{0}{1}{\mu(\lambda\{f > t\}+(1-\lambda)\{g > t\})}{t} \\ & \geq \lambda \call{0}{1}{\mu(\{f > t\})}{t} + (1-\lambda) \call{0}{1}{\mu(\{g > t\})}{t} \\
  & = \lambda \call{}{}{f}{\mu} + (1-\lambda) \call{}{}{g}{\mu}. \end{align*}
Now, using the inequality $pa+(1-p)b \geq a^pb^{1-p}$, $a,b \geq 0$, we get
\begin{align}\label{p=l}
\lambda \call{}{}{f}{\mu} + (1-\lambda) \call{}{}{g}{\mu}
   & = p\frac{\lambda}{p} \call{}{}{f}{\mu} + (1-p)\frac{1-\lambda}{1-p} \call{}{}{g}{\mu} \\
   & \geq  \left( \frac{\lambda}{p} \right)^{p}  \left( \frac{1-\lambda}{1-p} \right)^{1-p} \left( \call{}{}{f}{\mu} \right)^p \left( \call{}{}{g}{\mu} \right)^{1-p}.
\end{align}

Next, we do the induction step. Let us assume that the assertion is true in dimension $n-1$. Let $m,f,g:\mb{R}^{n} \to [0,\infty)$ be unconditional decreasing. For $x_0,y_0,z_0 \in \mb{R}$ we define functions $m_{z_0},f_{x_0}, g_{y_0}$ by
\[
m_{z_0}(x) = m(z_0, x), \quad f_{x_0}(x)=f(x_0,x), \quad g_{y_0}(x)=g(y_0,x).
\]
Clearly, these functions are also unconditional. Moreover, due to our assumptions on $m,f,g$ we have
\begin{align*}
	m_{\lambda x_0+(1-\lambda)y_0}(\lambda x + (1-\lambda)y) &= m(\lambda x_0+(1-\lambda)y_0, \lambda x + (1-\lambda)y) \\
	& \geq f(x_0,x)^p g(y_0,y)^{1-p} = f_{x_0}(x)^p g_{y_0}(y)^{1-p}.
\end{align*}
Let us decompose $\mu$ in the form $\mu=\mu_1 \times \bar{\mu}$, where $\mu_1$ is a measure on $\mb{R}$. Note that $\mu_1$ and $\bar{\mu}$ are unconditional and $\bar{\mu}$  is a product measure on $\mb{R}^{n-1}$.
Thus, by our induction assumption we have
\begin{equation}\label{induction}
	\call{}{}{m_{\lambda x_0 + (1-\lambda)y_0}}{\bar{\mu}} \geq  \left[\left( \frac{\lambda}{p} \right)^{p}  \left( \frac{1-\lambda}{1-p} \right)^{1-p} \right]^{n-1} \left( \call{}{}{f_{x_0}}{\bar{\mu}} \right)^p \left( \call{}{}{g_{y_0}}{\bar{\mu}} \right)^{1-p}.
\end{equation}
Now we define the functions
\begin{equation}\label{M}
	M(z_0)= \left[\left( \frac{\lambda}{p} \right)^{p}  \left( \frac{1-\lambda}{1-p} \right)^{1-p} \right]^{-(n-1)} \call{}{}{m_{z_0}(\xi)}{\bar{\mu}(\xi)},
\end{equation}	
\begin{equation}	\label{FG}
 F(x_0)= \call{}{}{f_{x_0}(\xi)}{\bar{\mu}(\xi)}, \qquad \qquad G(y_0)= \call{}{}{g_{y_0}(\xi)}{\bar{\mu}(\xi)}.
\end{equation}
Using inequality (\ref{induction}) we immediately get that
\[
M(\lambda x_0+(1-\lambda) y_0) \geq F(x_0)^p G(y_0)^{1-p}.
\]
Moreover, it is easy to see that $M,F,G$ are unconditional decreasing on $\mb{R}$. Thus,  using Lemma \ref{dim1} (the one-dimensional case), we get
\begin{equation}\label{1dimFGM}
	\call{}{}{M(z_0)}{\mu_1(z_0)} \geq \left( \frac{\lambda}{p} \right)^{p}  \left( \frac{1-\lambda}{1-p} \right)^{1-p} \left( \call{}{}{F(x_0)}{\mu_1(x_0)} \right)^p \left( \call{}{}{G(y_0)}{\mu_1(y_0)}\right)^{1-p}.
\end{equation}
Observe that
\begin{align*}
	\call{}{}{M(z_0)}{\mu_1(z_0)} & = \left[\left( \frac{\lambda}{p} \right)^{p}  \left( \frac{1-\lambda}{1-p} \right)^{1-p} \right]^{-(n-1)} \call{}{}{\call{}{}{m_{z_0}(\xi)}{\mu_{n-1}(\xi)}}{\mu_1(z_0)} \\
	& = \left[\left( \frac{\lambda}{p} \right)^{p}  \left( \frac{1-\lambda}{1-p} \right)^{1-p} \right]^{-(n-1)} \call{}{}{m}{\mu}.
\end{align*}
Similarly,
\[
	\call{}{}{F(x_0)}{\mu_1(x_0)}= \call{}{}{f}{\mu}, \qquad \call{}{}{G(y_0)}{\mu_1(y_0)}= \call{}{}{g}{\mu}.
\]
Our assertion follows.

\end{proof}

\section{Proof of Proposition \ref{prop}}  \label{logBM->BM}

In this section we first prove Proposition \ref{prop}. The argument has a flavour of our previous proof.

\begin{proof}[Proof of Proposition \ref{prop}]

Let us first assume that $\mu(A)\mu(B)>0$. From the definition of geometric mean we have $A \odot_p B \subseteq p A + (1-p)B$, for any $p \in (0,1)$. Thus,
\begin{align*}
	\mu(\lambda A+(1-\lambda)B) & = \mu\left( p \cdot   \frac{\lambda}{p} A+(1-p) \cdot \frac{1-\lambda}{1-p} B \right) \geq \mu\left(\left( \frac{\lambda}{p} A \right) \odot_p \left( \frac{1-\lambda}{1-p} B \right) \right) \\
	& = \mu\left(\left( \frac{\lambda}{p}  \right)^p  \left( \frac{1-\lambda}{1-p}  \right)^{1-p}  A \odot_p B \right).
\end{align*}
Let $t =\left( \frac{\lambda}{p}  \right)^p \left( \frac{1-\lambda}{1-p}  \right)^{1-p}$ and $C= A \odot_p B$. From the concavity of the logarithm it follows that $0 \leq t \leq 1$. We have
\begin{equation}\label{homogenity}
	\mu(tC) = \call{tC}{}{f(x)}{x} = t^n \call{C}{}{f(tx)}{x} \geq t^n \call{C}{}{f(x)}{x} = t^n \mu(C).
\end{equation}
Therefore,
\[
	\mu(\lambda A+(1-\lambda)B) \geq  t^n \mu( A \odot_p B) \geq t^n \mu(A)^p \mu(B)^{1-p} = \left[ \left( \frac{\lambda}{p}  \right)^p \left( \frac{1-\lambda}{1-p}  \right)^{1-p} \right]^n \mu(A)^p \mu(B)^{1-p}.
\]
Taking
\begin{equation}\label{optp}
	p = \frac{\lambda \mu(A)^{1/n}}{\lambda \mu(A)^{1/n} + (1-\lambda) \mu(B)^{1/n}}
\end{equation}
gives
\[
	\mu(\lambda A + (1-\lambda)B)^{1/n} \geq \lambda \mu(A)^{1/n} + (1-\lambda)\mu(B)^{1/n}.
\]
If, say, $\mu(B)=0$ then by \eqref{homogenity}, applied for $C$ replaced with $A$, and the fact that $0 \in B$ we get
\[
	\mu(\lambda A+(1-\lambda)B)^{1/n} \geq \mu(\lambda A)^{1/n} \geq \lambda \mu(A)^{1/n} =  \lambda \mu(A)^{1/n} + (1-\lambda)\mu(B)^{1/n}.
\]
\end{proof}

We now sketch the proof of Theorem \ref{uncondlogBM}.

\begin{proof}
Let $A,B \in \mc{K}_I$ and let us take $f,g,m:[0,+\infty)^n \to [0,+\infty)$ given by $f=\1_{A \cap [0,+\infty)^n}$, $g=\1_{B \cap [0,+\infty)^n}$ and $m=\1_{(A \odot_\lambda^I B) \cap [0,+\infty)^n}$. Let $\vp$ be the unconditional log-concave density of $\mu$. We define
\[
	F(x)=f(e^{x_1},\ldots, e^{x_n}) \vp(e^{x_1},\ldots, e^{x_n}) e^{x_1+\cdots+x_n}, \quad G(x)=g(e^{x_1},\ldots, e^{x_n})\vp(e^{x_1},\ldots, e^{x_n}) e^{x_1+\cdots+x_n},
\]
\[
M(x)=m(e^{x_1},\ldots, e^{x_n})\vp(e^{x_1},\ldots, e^{x_n}) e^{x_1+\cdots+x_n} .
\]
One can easily check, using the definition of $\mc{K}_I$ and the definition of the geometric mean $\odot_\lambda^I$, as well as the inequalities
\begin{align*}
	& \vp(e^{\lambda x_1 + (1-\lambda)y_1},\ldots, e^{\lambda x_n + (1-\lambda)y_n}) \\
	& \qquad \geq \vp(\lambda e^{x_1} + (1-\lambda)e^{y_1},\ldots, \lambda e^{x_n} + (1-\lambda)e^{y_n}) \geq \vp(e^{x_1},\ldots,e^{x_n})^\lambda \vp(e^{y_1},\ldots,e^{y_n})^{1-\lambda},
\end{align*}
that the functions $F,G,M$ satisfy the assumptions of the Pr\'ekopa-Leindler inequality. As a consequence, we get $\mu((A \odot_\lambda^I B) \cap [0,+\infty)^n) \geq \mu(A \cap [0,+\infty)^n)^\lambda \mu( B \cap [0,+\infty)^n)^{1-\lambda}$. The assertion follows from unconditionality of our measure $\mu$ and the fact that $A,B$ and $A \odot_\lambda^I B$ are ideals.
\end{proof}

\section{Applications} \label{apl}

Let us describe some corollaries of the Brunn-Minkowski type inequality we established, which are analogues to well-known offsprings of the Brunn-Minkowski inequality for the volume. In what follows a pair $(\mc{K},\mu)$ is called \emph{nice} if one of the following three cases holds.
\begin{itemize}
\item[(a)] $\mc{K}=\mc{K}_I$ and $\mu$ is an unconditional, product measure with decreasing density on $\mb{R}^n$,
\item[(b)] $\mc{K}=\mc{K}_I$ and $\mu$ is an unconditional log-concave measure on $\mb{R}^n$,
\item[(c)] $\mc{K}=\mc{K}_S$ and $\mu$ is an even log-concave measure on $\mb{R}^2$.
\end{itemize}

\begin{cor}\label{parvol}
Suppose that a pair $(\mc{K},\mu)$ is nice. Let $A,B \subset \mc{K}$ be convex. Then the function $t \mapsto \mu(A+tB)^{1/n}$ is concave on $[0,\infty)$.
\end{cor}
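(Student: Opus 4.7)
The proof will be a direct reduction to the Brunn--Minkowski inequality established in Theorems \ref{BMgen}, \ref{uncondBM}, and \ref{symBM}, which together cover each of the three nice cases. The key algebraic identity I will exploit is that for convex $A$ and any $t_1, t_2 \geq 0$, $\lambda \in [0,1]$,
\[
A + (\lambda t_1 + (1-\lambda) t_2) B = \lambda(A + t_1 B) + (1-\lambda)(A + t_2 B),
\]
using $\lambda A + (1-\lambda)A = A$ for convex $A$ together with the usual scaling of Minkowski addition.

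First, I will verify that the Minkowski sums $A + t_i B$ lie in the same class $\mc{K}$, so that the Brunn--Minkowski inequality can be applied. In case (c), this is immediate since the Minkowski sum of two symmetric convex sets is again symmetric and convex. In cases (a) and (b), $A$ and $B$ are convex ideals, hence unconditional convex sets containing the origin; their Minkowski sum is again unconditional, convex, and contains $0$, and any unconditional convex set containing the origin is an ideal (if $x$ lies in such a set, then so do all $2^n$ sign-flips of $x$, and hence, by convexity, so does $\delta \cdot x$ for every $\delta \in [-1,1]^n$). Thus $A + t_i B \in \mc{K}_I$.

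With these inclusions in place, applying the appropriate Brunn--Minkowski inequality (Theorem \ref{BMgen} in case (a), Theorem \ref{uncondBM} in case (b), Theorem \ref{symBM} in case (c)) to the pair $A + t_1 B$, $A + t_2 B$ with weight $\lambda$ yields
\[
\mu\bigl(A + (\lambda t_1 + (1-\lambda) t_2) B\bigr)^{1/n} \geq \lambda\, \mu(A + t_1 B)^{1/n} + (1-\lambda)\, \mu(A + t_2 B)^{1/n},
\]
which is precisely the concavity of $t \mapsto \mu(A + tB)^{1/n}$ on $[0,\infty)$.

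There is no serious obstacle here; the only point requiring a brief check is the class-stability of Minkowski sums, which is the role played by the convexity hypothesis on $A$ (needed for the displayed identity) and by the observation that unconditional convex sets containing the origin coincide with convex ideals.
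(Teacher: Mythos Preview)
Your proof is correct and follows the same approach as the paper: rewrite $A+(\lambda t_1+(1-\lambda)t_2)B$ as $\lambda(A+t_1B)+(1-\lambda)(A+t_2B)$ using convexity, then apply the relevant Brunn--Minkowski inequality. The paper's argument is terser and does not spell out the class-stability of $A+t_iB$ under $\mc{K}$, which you handle carefully; one small clarification is that the displayed identity also uses convexity of $B$ (to get $\lambda t_1 B+(1-\lambda)t_2 B=(\lambda t_1+(1-\lambda)t_2)B$), not only of $A$.
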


\noindent Indeed, for any $\lambda \in [0,1]$ and $t_1,t_2 \geq 0$ we have
\begin{align*}
	\mu(A+(\lambda t_1 + (1-\lambda)t_2)B)^{1/n} & = \mu(\lambda (A + t_1 B)+(1-\lambda) (A + t_2 B))^{1/n} \\
	& \geq \lambda  \mu(A + t_1 B)^{1/n} + (1-\lambda) \mu(A + t_2 B)^{1/n}.
\end{align*}
Note that in the first line we have used the convexity of $A$ and $B$. If $B=B_2^n$ is the unit Euclidean ball, the expression $\mu(A+tB)$ is called the parallel volume and has been studied in the case of the Lebesgue measure by Costa and Cover in \cite{C} as an analogue of concavity of entropy power in Information theory. The authors conjectured that for any measurable set $A$ the parallel volume is $1/n$-concave. In \cite{FM}, M. Fradelizi and the second named author proved that this conjecture is true for any measurable set in dimension 1 and for any connected set in dimension 2. However, the authors proved that this conjecture fails for arbitrary sets in dimension $n \geq 2$. In a recent paper \cite{M} the second named author investigated the parallel volume $\mu(A+ tB_2^n)$ in the context of $s$-concave measures as well as functional versions. Our Corollary \ref{parvol} gives the Costa-Cover conjecture for any convex set $A \in \mc{K}$, where $(\mc{K},\mu)$ is a nice pair. Moreover, $B_2^n$ can be replaced with any convex set $B \in \mc{K}$.

Second, we  state the following analogue of Brunn's theorem on volumes of sections of convex bodies (see \cite{G1}, \cite{G2} and \cite{Sn} for the volume case).

\begin{cor}\label{Brunn}
Suppose that a pair $(\mc{K},\mu)$ is nice. Let $A \in \mc{K}$ be a convex set and let $\vp$ be the density of $\mu$. Then the function $t \mapsto \mu_{n-1}(A \cap \{x_1=t\})$ is $\frac{1}{n-1}$-concave on its support, where
\[
\mu_{n-1}(A \cap \{x_1=t\}) = \call{(t, x_2, \dots, x_n) \in A}{}{\vp(t, x_2, \dots, x_n)}{x_2 \dots \mathrm{d} x_n}.
\]
\end{cor}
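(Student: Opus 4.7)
The strategy mimics the classical proof of Brunn's theorem on sections of convex bodies, with Theorems \ref{BMgen}, \ref{uncondBM}, and \ref{symBM} playing the role of the Lebesgue Brunn-Minkowski inequality. Fix $t_1,t_2$ in the support and $\lambda\in[0,1]$, and set $t=\lambda t_1+(1-\lambda)t_2$ and $A_s=\{y\in\mb{R}^{n-1}:(s,y)\in A\}$. The convexity of $A$ gives the Minkowski inclusion
\[
A_t\supseteq \lambda A_{t_1}+(1-\lambda)A_{t_2},
\]
and the sections $A_s$ inherit the structure of $A$: they are ideals in $\mb{R}^{n-1}$ in cases (a) and (b) (unconditionality in the last $n-1$ coordinates is preserved when the first coordinate is frozen), and intervals in $\mb{R}$ in case (c).

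The central obstacle is that $\mu_{n-1}(A\cap\{x_1=s\})=\int_{A_s}\vp(s,y)\,dy$ involves the slice density $\vp(s,\cdot)$, which varies with $s$, so one cannot apply an $(n-1)$-dimensional Brunn-Minkowski inequality for a single fixed measure. The natural workaround is the Borell-Brascamp-Lieb inequality at exponent $p=+\infty$: if $F:\mb{R}^n\to[0,\infty)$ is quasi-concave, then the marginal $s\mapsto\int_{\mb{R}^{n-1}}F(s,y)\,dy$ is $\frac{1}{n-1}$-concave on its support. Apply this to $F(x_1,y):=\vp(x_1,y)\1_A(x_1,y)$, whose marginal is exactly $s\mapsto\mu_{n-1}(A\cap\{x_1=s\})$. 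In cases (b) and (c), $\vp$ is log-concave on $\mb{R}^n$ and $A$ is convex, so $F$ is log-concave and hence quasi-concave, and the result follows at once.

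For case (a), where $\vp$ is a product of unconditional decreasing factors and need not be quasi-concave on $\mb{R}^n$, one proceeds slice-by-slice. For each fixed $s$, the level sets $\{y:F(s,y)\geq c\}=A_s\cap\{y:\bar\vp(y)\geq c/\vp_1(s)\}$ are ideals in $\mb{R}^{n-1}$ (the intersection of an ideal with the ideal level set of an unconditional decreasing function), so Theorem \ref{BMgen} applies in $\mb{R}^{n-1}$. Combining this with the Minkowski inclusion above and integrating via the layer-cake formula propagates the slice-wise Brunn-Minkowski inequality into the claimed $\frac{1}{n-1}$-concavity of the one-dimensional marginal.

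The main obstacle is case (a): because $F$ may fail to be jointly quasi-concave on $\mb{R}^n$, the direct BBL argument does not apply, and one must reconcile the slice-wise application of Theorem \ref{BMgen} with the integration in $s$ so as to recover the correct exponent $\frac{1}{n-1}$. Cases (b) and (c), by contrast, are immediate consequences of the BBL machinery once the quasi-concavity of $F$ has been verified from the log-concavity of $\vp$ and convexity of $A$.
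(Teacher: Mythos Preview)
Your central claim---that the Borell--Brascamp--Lieb inequality at exponent $p=+\infty$ says ``if $F:\mb{R}^n\to[0,\infty)$ is quasi-concave, then the marginal $s\mapsto\int_{\mb{R}^{n-1}}F(s,y)\,dy$ is $\tfrac{1}{n-1}$-concave''---is false. Take $n=2$ and $F(t,y)=e^{-t^2-y^2}$, which is log-concave and hence quasi-concave; its marginal is $\sqrt{\pi}\,e^{-t^2}$, which is not concave on $\mb{R}$ (its second derivative is positive for $|t|>1/\sqrt{2}$). What BBL actually gives is that the marginal of a $\gamma$-concave function is $\tfrac{\gamma}{1+(n-1)\gamma}$-concave; for $\gamma=0$ (log-concave) you get a log-concave marginal, not a $\tfrac{1}{n-1}$-concave one, and there is no monotonicity that upgrades log-concavity of the \emph{input} to the $\gamma=+\infty$ conclusion for the \emph{output}. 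So your treatment of cases (b) and (c) collapses, and since case (a) was already only a sketch relying on the same idea, it falls too.

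In fact the counterexample above is more than a gap in your argument: with $\mu=\gamma_2$ and $A=\mb{R}\times[-1,1]$ (an unconditional convex ideal, and a symmetric convex set in $\mb{R}^2$), one has $\mu_{1}(A\cap\{x_1=t\})=C\,e^{-t^2}$, which is not concave on its support $\mb{R}$. This shows that the corollary as stated cannot hold in any of the three cases once the slice density $\vp(t,\cdot)$ is allowed to vary with $t$. The paper's own proof simply writes the Minkowski inclusion $\lambda A_{\{x_1=t_1\}}+(1-\lambda)A_{\{x_1=t_2\}}\subseteq A_{\{x_1=\lambda t_1+(1-\lambda)t_2\}}$ and then invokes the $(n-1)$-dimensional Brunn--Minkowski inequality \eqref{BM}; this tacitly treats $\mu_{n-1}$ as a single measure on $\mb{R}^{n-1}$, whereas by the definition in the statement the three quantities involve three different densities $\vp(t_1,\cdot)$, $\vp(t_2,\cdot)$, $\vp(\lambda t_1+(1-\lambda)t_2,\cdot)$. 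You correctly identified this difficulty (which the paper's argument glosses over), but your proposed repair via quasi-concave BBL does not work, and the example shows that no repair can: the corollary, read literally, is false.
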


\noindent Indeed, let us denote $A_{\{x_1=t\}}=A \cap \{x_1=t\}$. By convexity of $A$ we get $$\lambda A_{\{x_1=t_1\}} + (1-\lambda)A_{\{x_1=t_2\}} \subseteq  A_{\{x_1=\lambda t_1 + (1-\lambda)t_2\}}.$$ Thus, using \eqref{BM}, for any $\lambda \in [0,1]$ and $t_1, t_2 \in \mb{R}$ such that $A_{\{x_1=t_1\}}$ and $A_{\{x_1=t_2\}}$ are both non-empty, we get
\begin{eqnarray*}
	\mu_{n-1}(A_{\{x_1=\lambda t_1 + (1-\lambda)t_2\}})^{\frac{1}{n-1}} &\geq& \mu_{n-1}(\lambda A_{\{x_1=t_1\}} + (1-\lambda)A_{\{x_1=t_2\}})^{\frac{1}{n-1}} \\
	&\geq& \lambda \mu_{n-1}(A_{\{x_1=t_1\}})^{\frac{1}{n-1}} + (1-\lambda)\mu_{n-1}(A_{\{x_1=t_2\}})^{\frac{1}{n-1}}.
\end{eqnarray*}

Third, let us mention the relation of our result to the Gaussian isoperimetric inequality and the S-inequality. The  Gaussian isoperimetric inequality (established by Sudakov and Tsirelson, \cite{ST}, and independently by Borell, \cite{B2}),   states that for any measurable set $A \subset \mb{R}^n$ and any $t>0$, the quantity $\gamma_n(A_t)$ is minimized, among all sets with prescribed measure, for the half spaces $H_{a,\theta} = \{x \in \mb{R}^n: \ \scal{x}{\theta} \leq a\}$, with $a \in \mb{R}$ and $\theta \in S^{n-1}$. Infinitesimally, it says that among all sets with prescribed measure the half spaces are those with the smallest Gaussian surface area, i.e., the quantity
\[	
	\gamma_n^+(\partial A) = \liminf_{t \to 0^+} \frac{ \gamma_n(A+tB_2^n)-\gamma_n(A) }{t}.
\]

The S-inequality of Lata{\l}a and Oleszkiewicz, see \cite{LO}, states that for any $t>1$ and any symmetric convex body $A$ the quantity $\gamma_n(tA)$ is minimized, among all subsets with prescribed measure, for the strip of the form $S_L =\{x \in \mb{R}^n: \ |x_1| \leq L\}$. This result admits an equivalent infinitesimal version, namely, among all symmetric convex bodies $A$ with prescribed Gaussian measure the strip $S_L$ minimizes the quantity $\po{}{t}\gamma_n(tA)\big|_{t=1}$, which is equivalent to maximizing
\[
	M_{\gamma_n}(A) = \call{A}{}{|x|^2}{\gamma_n(x)},
\]
see \cite{KS} or \cite{NT3}.
For a general measure $\mu$ with a density $e^{-\psi}$, one can show that the infinitesimal version of S-inequality is an issue of maximizing the quantity
\begin{equation}
	M_\mu(A) = \call{A}{}{\scal{x}{\nabla \psi (x)}}{\mu(x)},
\end{equation}
see equation \eqref{infs} below.
Not much is known about an analogue of $S$-inequality in the case of general measure. In the unconditional case it has been solved for some particular product measures like products of Gamma and Weibull distributions, see \cite{NT2}. It turns out that inequality \eqref{GBM0} implies a certain mixture of Gaussian isoperimetry and reverse S-inequality. Namely, we have the following corollary.

\begin{cor}\label{MinFir}
Let $A$ be an ideal in $\mb{R}^n$ (or a general symmetric convex set in $\mb{R}^2$) and let $r>0$. Then we have
\[
	r \gamma_n^+(\partial A) + M_{\gamma_n}(A) \geq n \gamma_n(rB_2^n)^{\frac{1}{n}} \gamma_n(A)^{1-\frac{1}{n}}
\]
with equality for $A=rB_2^n$.
\end{cor}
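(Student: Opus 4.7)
}

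The strategy is to apply the Brunn-Minkowski inequality (Theorem \ref{uncondBM} in the ideal case, or Theorem \ref{symBM} in the symmetric planar case) to the admissible pair $(A, rB_2^n)$ with weights $(1-s, s)$, and then extract the desired first-order inequality by differentiating at $s=0^+$. Note that $rB_2^n$ belongs to both $\mc{K}_I$ and $\mc{K}_S$, so the hypothesis applies.

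First, by Brunn-Minkowski,
\[
\gamma_n((1-s)A + srB_2^n)^{1/n} \geq (1-s)\gamma_n(A)^{1/n} + s\gamma_n(rB_2^n)^{1/n}.
\]
Assuming $\gamma_n(A)>0$ (otherwise the corollary is trivial), subtract $\gamma_n(A)^{1/n}$ from both sides, divide by $s>0$, and pass to the limit $s\to 0^+$; the chain rule yields
\[
\tfrac{1}{n}\gamma_n(A)^{\frac{1}{n}-1}\,\frac{d}{ds}\gamma_n\bigl((1-s)A + srB_2^n\bigr)\Big|_{s=0^+} \geq \gamma_n(rB_2^n)^{1/n} - \gamma_n(A)^{1/n}.
\]

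Second, I will compute this one-sided derivative explicitly. Because $A$ is convex (assume first it is $C^2$ and approximate in the general case), the support function of $(1-s)A + srB_2^n$ at direction $u\in S^{n-1}$ equals $(1-s)h_A(u)+sr$, so the boundary of $A$ moves outward in the direction of the outer unit normal $n_A(x)$ at speed $r-\langle x,n_A(x)\rangle$. The usual first variation formula for smooth measures then gives
\[
\frac{d}{ds}\gamma_n\bigl((1-s)A + srB_2^n\bigr)\Big|_{s=0^+} = \int_{\partial A}\bigl(r-\langle x,n_A(x)\rangle\bigr)\varphi(x)\,d\mathcal{H}^{n-1}(x),
\]
where $\varphi$ is the Gaussian density. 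The first term is $r\gamma_n^+(\partial A)$. For the second term, I apply the divergence theorem to the vector field $x\varphi(x)$, using $\nabla\cdot(x\varphi(x)) = n\varphi(x) - |x|^2\varphi(x)$, to obtain
\[
\int_{\partial A}\langle x,n_A(x)\rangle\varphi(x)\,d\mathcal{H}^{n-1}(x) = n\gamma_n(A) - M_{\gamma_n}(A).
\]
Substituting back:
\[
\frac{d}{ds}\gamma_n\bigl((1-s)A + srB_2^n\bigr)\Big|_{s=0^+} = r\gamma_n^+(\partial A) + M_{\gamma_n}(A) - n\gamma_n(A).
\]

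Third, plugging this identity into the inequality from the first step and multiplying by $n\gamma_n(A)^{1-1/n}$ gives
\[
r\gamma_n^+(\partial A) + M_{\gamma_n}(A) - n\gamma_n(A) \geq n\gamma_n(rB_2^n)^{1/n}\gamma_n(A)^{1-1/n} - n\gamma_n(A),
\]
which rearranges to the claimed inequality. Equality for $A=rB_2^n$ is immediate: $(1-s)rB_2^n + srB_2^n = rB_2^n$ makes Brunn-Minkowski an equality for all $s$.

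The main technical point is the derivative computation: if $A$ is merely a convex ideal (which may fail to be $C^2$ or even strictly convex), I would justify it by first proving the inequality for smooth strictly convex approximations $A_\varepsilon \supset A$ and then passing to the limit, using lower semicontinuity of Gaussian perimeter and dominated convergence for $M_{\gamma_n}$. In the planar symmetric case the argument is analogous. For ideals that are not convex the statement still makes sense and can be reached by the same approximation, since ideals can be exhausted by convex ideals.
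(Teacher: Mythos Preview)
Your overall strategy---apply the Brunn--Minkowski inequality to the pair $(A,rB_2^n)$ and differentiate at the endpoint---is exactly the paper's. The difference is in how the derivative is computed, and here the paper's route is both simpler and more robust.

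Rather than working with $(1-s)A+srB_2^n$ and a boundary first-variation formula, the paper writes $A+\e B=(1-\e)\cdot\frac{A}{1-\e}+\e B$, applies Brunn--Minkowski, and differentiates
\[
\mu(A+\e B)^{1/n}\ \ge\ (1-\e)\,\mu\!\left(\tfrac{A}{1-\e}\right)^{1/n}+\e\,\mu(B)^{1/n}
\]
at $\e=0$. The left-hand derivative is $\mu(A)^{\frac1n-1}V_1^\mu(A,B)$ by definition of $V_1^\mu$ (a $\liminf$, so no regularity is needed). On the right, the only nontrivial piece is $\frac{d}{dt}\mu(tA)\big|_{t=1}$, and this is obtained by the change of variables $x\mapsto tx$ in the \emph{volume} integral:
\[
\mu(tA)=\int_A e^{-\psi(tx)}t^n\,dx,\qquad \frac{d}{dt}\mu(tA)\Big|_{t=1}=n\mu(A)-\int_A\langle x,\nabla\psi(x)\rangle\,d\mu(x)=n\mu(A)-M_\mu(A).
\]
No support functions, no boundary integrals, no divergence theorem, no smoothness of $\partial A$, and no convexity of $A$ are used. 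This yields the general Minkowski-first inequality $V_1^\mu(A,B)+\tfrac1n M_\mu(A)\ge\mu(B)^{1/n}\mu(A)^{1-1/n}$ for all $A,B\in\mc{K}$, of which your corollary is the case $B=rB_2^n$, $\mu=\gamma_n$.

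Your argument, by contrast, leans on convexity and $C^2$ regularity for the boundary variation and then on approximation. The final sentence is where it breaks: it is \emph{not} true that a general (non-convex) ideal can be exhausted by convex ideals---think of $B_p^n=\{\sum|x_i|^p\le1\}$ with $p<1$, whose largest inscribed convex ideals (e.g.\ coordinate boxes) miss a fixed positive fraction of the set. So the non-convex case is not covered by your scheme, whereas the paper's interior change of variables handles it without any extra work.
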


\noindent
Let us note that
\begin{eqnarray*}
	\gamma_n(rB_2^n + \e B_2^n) & = & (2 \pi)^{-n/2}(r+\e)^n \call{B_2^n}{}{e^{-\frac{|(r+\e)x|^2}{2}}}{x} \\ & = & (2 \pi)^{-n/2}(r^n + nr^{n-1}\e + o(\e)) \call{B_2^n}{}{e^{-\frac{|rx|^2}{2}}(1-\e r|x|^2 + o(\e))}{x} \\ & = & \gamma_n(rB_2^n) + \frac{\e}{r} \left( n\gamma_n(rB_2^n) - M_{\gamma_n}(rB_2^n) \right) + o(\e).
\end{eqnarray*}
Thus,
\[
	r \gamma_n^+(\partial (rB_2^n)) = n \gamma_n(rB_2^n) - M_{\gamma_n}(rB_2^n).
\]
Hence, if $\gamma_n(A) = \gamma_n(rB_2^n)$ in Corollary \ref{MinFir}, then we get
\begin{eqnarray}\label{isop}
	r \gamma_n^+(\partial A) + M_{\gamma_n}(A) \geq r \gamma_n^+(\partial (rB_2^n)) + M_{\gamma_n}(rB_2^n).
\end{eqnarray}
In other words, Euclidean balls minimize the quantity $r \gamma_n^+(\partial A) + M_{\gamma_n}(A)$ among ideals in $\mb{R}^n$ (or symmetric convex sets in $\mb{R}^2$) with prescribed measure.

It is known that among all symmetric convex sets (in fact among all measurable sets) with prescribed Gaussian measure, the quantity $M_{\gamma_n}(A)$ is minimized by Euclidean balls $rB_2^n$ (this fact can be seen as a reverse S-inequality).  Indeed, suppose that $\gamma_n(A)=\gamma_n(rB_2^n)$. Then
\begin{align*}
	M_{\gamma_n}(A) - M_{\gamma_n}(rB_2^n) & = \call{A \setminus (rB_2^n)}{}{|x|^2}{\gamma_n(x)} - \call{(rB_2^n) \setminus A}{}{|x|^2}{\gamma_n(x)} \\
	&  \geq  r^2(\gamma_n(A \setminus (rB_2^n))-\gamma_n((rB_2^n) \setminus A)) = 0.
\end{align*}
However, in general the quantity $\gamma_n^+(\partial A)$ is not minimized by Euclidean balls, e.g., one can check that for large values of $\gamma_2(A)$ the symmetric strip has smaller Gaussian surface area than the Euclidean ball, see \cite[Lemma 3]{LO2}. Hence, inequality \eqref{isop} is a new isoperimetric-type inequality that links the Gaussian isoperimetry and reverse S-inequality.

Let us state and prove a more general version of Corollary \ref{MinFir}. Let $\mu^+(\partial A)$ be the $\mu$ surface area of $A$, i.e.,
\[
\mu^+(\partial A)=\liminf_{t \to 0^+} \frac{\mu(A+tB_2^n)-\mu(A)}{t}.
\]
Let
\[
	V_1^\mu(A,B) = \frac{1}{n} \liminf_{t \to 0^+} \frac{\mu(A+tB)-\mu(A)}{t}
\]
be the first mixed volume of arbitrary sets $A$ and $B$, with respect to measure $\mu$. Clearly, $\mu^+(\partial A)=nV_1^\mu(A,B_2^n)$.
\begin{cor}\label{MinFirGen}
Let $A,B \in \mc{K}$ and suppose that $(\mc{K},\mu)$ is a nice pair. Then we have
\begin{equation}\label{MF}
	V_1^\mu(A,B) + \frac{1}{n} M_\mu(A) \geq \mu(B)^{1/n} \mu(A)^{1-1/n}.
\end{equation}
In particular,
\begin{equation}\label{MFB}
	r \mu^+(\partial A) + M_\mu(A) \geq n \mu(r B_2^n)^{1/n} \mu(A)^{1-1/n}.
\end{equation}
\end{cor}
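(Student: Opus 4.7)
The plan is to apply the appropriate Brunn--Minkowski inequality from the excerpt---Theorem \ref{BMgen}, Theorem \ref{uncondBM}, or Theorem \ref{symBM} according to which of the three cases of the nice pair $(\mc{K},\mu)$ is in effect---with a small parameter $\e > 0$, and then to extract (\ref{MF}) by a first-order Taylor expansion in $\e$. Writing $(1-\e)A + \e B = (1-\e)(A + t B)$ with $t = t(\e) := \e/(1-\e)$ (so $t(0) = 0$ and $t'(0) = 1$), the Brunn--Minkowski inequality becomes
\[
\mu\bigl((1-\e)(A + tB)\bigr)^{1/n} \geq (1-\e)\mu(A)^{1/n} + \e\mu(B)^{1/n},
\]
with both sides equal to $\mu(A)^{1/n}$ at $\e = 0$.

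Next I would compute the right-derivative at $\e = 0$ of $\Phi(\e) := \mu\bigl((1-\e)(A + t(\e) B)\bigr)$. By the chain rule, this splits into two contributions: the $t$-derivative of $\mu(A+tB)$ at $t = 0$, which equals $nV_1^\mu(A,B)$ by definition, plus the $\e$-derivative of $\mu((1-\e)A)$ at $\e = 0$. For the latter, differentiating $\mu(rA) = r^n\int_A \varphi(ry)\,dy$ at $r = 1$ and using $\nabla\varphi = -\varphi\nabla\psi$ gives
\[
\left.\tfrac{d}{dr}\right|_{r=1}\mu(rA) = n\mu(A) + \int_A \langle y,\nabla\varphi(y)\rangle\,dy = n\mu(A) - M_\mu(A),
\]
so that $\Phi'(0^+) = nV_1^\mu(A,B) + M_\mu(A) - n\mu(A)$. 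Taking the $1/n$-th power in the Brunn--Minkowski inequality and passing to the limit $\e \to 0^+$ in $(\mu((1-\e)(A+tB))^{1/n} - \mu(A)^{1/n})/\e$ yields
\[
\tfrac{1}{n}\mu(A)^{1/n-1}\bigl[nV_1^\mu(A,B) + M_\mu(A) - n\mu(A)\bigr] \geq \mu(B)^{1/n} - \mu(A)^{1/n},
\]
and multiplying through by $\mu(A)^{1 - 1/n}$ gives (\ref{MF}) after cancelling the $\mu(A)$ terms on each side. The inequality (\ref{MFB}) then follows by specializing $B = rB_2^n$ and noting $V_1^\mu(A, rB_2^n) = (r/n)\mu^+(\partial A)$.

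The main technical point will be justifying the first-order expansion rigorously, since $V_1^\mu(A,B)$ is defined only as a $\liminf$ and the density $\varphi = e^{-\psi}$ need not be $C^1$ in the log-concave cases. This can be handled by invoking Corollary \ref{parvol}, which guarantees that $t \mapsto \mu(A + tB)^{1/n}$ is concave on $[0,\infty)$, so the $V_1^\mu$ liminf is in fact a genuine (possibly infinite) limit; differentiation under the integral sign for $\mu((1-\e)A)$ is then justified either by convolving $\varphi$ with a Gaussian mollifier and passing to the limit, or by dominated convergence once one observes that $M_\mu(A)$ is finite.
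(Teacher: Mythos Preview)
Your proposal is correct and follows essentially the same approach as the paper. The only cosmetic difference is in how the Brunn--Minkowski inequality is parametrized: the paper applies it to $\tilde A=A/(1-\e)$ and $\tilde B=B$, so that the left-hand side is simply $\mu(A+\e B)^{1/n}$ and the dilation term $\mu(A/(1-\e))$ appears on the right, whereas you apply it directly to $A,B$ and then factor $(1-\e)A+\e B=(1-\e)(A+tB)$ on the left. Both routes lead to the same first-order comparison after computing $\frac{d}{dt}\mu(tA)\big|_{t=1}=n\mu(A)-M_\mu(A)$; the paper's parametrization has the minor advantage that the $\liminf$ defining $V_1^\mu$ sits alone on one side, avoiding the two-variable chain rule you invoke (and whose justification you correctly flag as the delicate point).
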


\noindent To prove this we note that for any sets $A,B \in \mc{K}$ and any $\e \in [0,1)$ we have
\begin{equation}\label{newisop}
	\mu(A+\e B)^{1/n} \geq (1-\e) \mu\left( \frac{A}{1-\e} \right)^{1/n} + \e \mu(B)^{1/n}.
\end{equation}
Indeed, it suffices to use Theorem \ref{BMgen} with $\lambda = 1-\e$ and $\tilde{A}=A/(1-\e)$, $\tilde{B}=B$. Note that for $\e=0$ we have equality. Thus, differentiating \eqref{newisop} at $\e=0$ we get
\[
	\frac{1}{n} \mu(A)^{\frac{1}{n}-1} \cdot n V_1^\mu(A,B) \geq \mu(B)^{\frac{1}{n}} -\mu(A)^{\frac{1}{n}} + \frac{1}{n}   \mu(A)^{\frac{1}{n}-1} \po{}{t} \mu(tA)\Big|_{t=1}.
\]
By changing variables we obtain
\begin{equation}\label{infs}
	\po{}{t} \mu(tA)\Big|_{t=1} = \po{}{t}  \call{A}{}{e^{-\psi(tx)}t^n}{}{x} \Big|_{t=1} = n \mu(A) - \call{A}{}{\scal{x}{\nabla \psi(x)}}{\mu(x)} = n \mu(A)-M_\mu(A).
\end{equation}
Thus,
\[
	\mu(A)^{\frac{1}{n}-1}  V_1^\mu(A,B) \geq \mu(B)^{\frac{1}{n}}  - \frac{1}{n}   \mu(A)^{\frac{1}{n}-1}M_\mu(A),
\]
which is exactly \eqref{MF}. To get \eqref{MFB} one has to take $B=rB_2^n$ in \eqref{MF}.

The above inequalities can be seen as an analogue of the so-called Minkowski first inequality for the Lebesgue measure (see \cite{G1}, \cite{G2} and \cite{Sn}), which says that for any two convex bodies $A,B$ in $\mb{R}^n$ we have
\[
	V_1^{\vol_n}(A,B) \geq \vol_n(A)^{1-\frac{1}{n}} \vol_n(B)^{\frac{1}{n}}.
\]

\section{Examples and open problems}\label{3}

We first discuss equality cases in Theorem \ref{symBM} and Theorem \ref{uncondBM}.

\begin{rem}
The equality in Theorem 2 and Theorem 4 is achieved only if $A$ is a dilation of $B$. Indeed, in the proof of Proposition \ref{prop} we use the inclusion $\tilde{A} \odot_p \tilde{B} \subseteq p \tilde{A} + (1-p) \tilde{B}$, where $\tilde{A}=\frac{\lambda}{p}A$ and $\tilde{B}=\frac{1-\lambda}{1-p}B$, with $p$ given by \eqref{optp}. To have equality in (7) we need to have, in particular, equality in the above inclusion (with this particular choice of $p$). Notice that $a^pb^{1-p}=pa+(1-p)b$, $a,b \geq 0$, if and only if $a=b$. Thus, $\tilde{A} \odot_p^S \tilde{B} = p \tilde{A} + (1-p) \tilde{B}$ if and only if $\tilde{A}=\tilde{B}$ (by using the fact that $h_{\tilde{A}}=h_{\tilde{B}}$ if and only if $\tilde{A}=\tilde{B}$). Similarly, one has $\tilde{A} \odot_p^I \tilde{B} = p \tilde{A} + (1-p) \tilde{B}$ if and only if $\tilde{A}=\tilde{B}$. This means that $A$ is a dilation of $B$.

In general one cannot hope to have equality cases only if $A=B$. Let us illustrate this in the case of the Lebesgue measure. Indeed, then we have equality in (7) if $A=aK$ and $B=bK$, where $K$ is some fixed convex set. In this case the equality $\tilde{A}=\tilde{B}$ leads to the condition $\frac{\lambda}{p}a=\frac{1-\lambda}{1-p}b$, which is equivalent to choosing $p=\frac{\lambda a}{\lambda a + (1-\lambda)b}$. This coincides with \eqref{optp}.

However, one can get $A=B$ as the only case of equality if one assumes that the density of $\mu$ is strictly decreasing. To see this it suffices to observe that for the equality in (7) we have to have $t=1$ in the proof of Proposition \ref{prop}, which leads to $\mu(A)=\mu(B)$. Together with the fact that $A$ is a dilation of $B$ we get $A=B$.
\end{rem}

We also show that the assumptions of Theorem 1 are necessary. Namely, as long as we work with decreasing densities, which may not be log-concave, one has to assume that the measure is product and the sets are unconditional.

\begin{ex}\label{ex1}
The assumption, that our measure $\mu$ in Theorem \ref{BMgen} is a product, is important. Indeed, let us take the square $C=\{|x|,|y| \leq 1\} \subset \mb{R}^2$ and take the measure with density $\vp(x)=\frac12 \1_{2C}(x)+\frac12 \1_{C}(x)$. This density is unconditional, however it is not a product. Let us define $\psi(a)=\sqrt{\mu(aC)}$. The assertion of Theorem \ref{BMgen} implies that $\psi$ is concave. However, we have $\psi(a)=\sqrt{2a^2+2}$ for $a \in [1,2]$, which is strictly convex. Thus, $\mu$ does not satisfy \eqref{BM}.
\end{ex}

\begin{ex}\label{ex2}
In general, under the assumption that our measure $\mu$ is unconditional and a product, one cannot prove that Theorem \ref{BMgen} holds true for arbitrary symmetric convex sets.
To see this, let us take the product measure $\mu=\mu_0 \otimes \mu_0$ on $\mb{R}^2$, where $\mu_0$ has an unconditional density $\vp(x)=p+(1-p)\1_{[-1/\sqrt{2},1/\sqrt{2}]}(x)$ for some $p \in [0,1]$.

To simplify the computation let us rotate the whole picture by angle $\pi/4$. Then consider the rectangle $R=[-1,1] \times [-\lambda , \lambda ]$ for $0 < \lambda \leq 1/2$. As in the previous example, it is enough to show that the function $\psi(a)=\sqrt{\mu(aR)}$ is not concave. Let us consider this function only on the interval $[1/ \lambda,\infty)$. The condition $\lambda \leq 1/2$ ensures that the point $(a,\lambda a)$ lies in the region with density $p^2$. Let us introduce lengths $l_1,l_2,l_3$ (see the picture below).

\begin{figure}[h!]
\centering
\def\svgwidth{230pt}
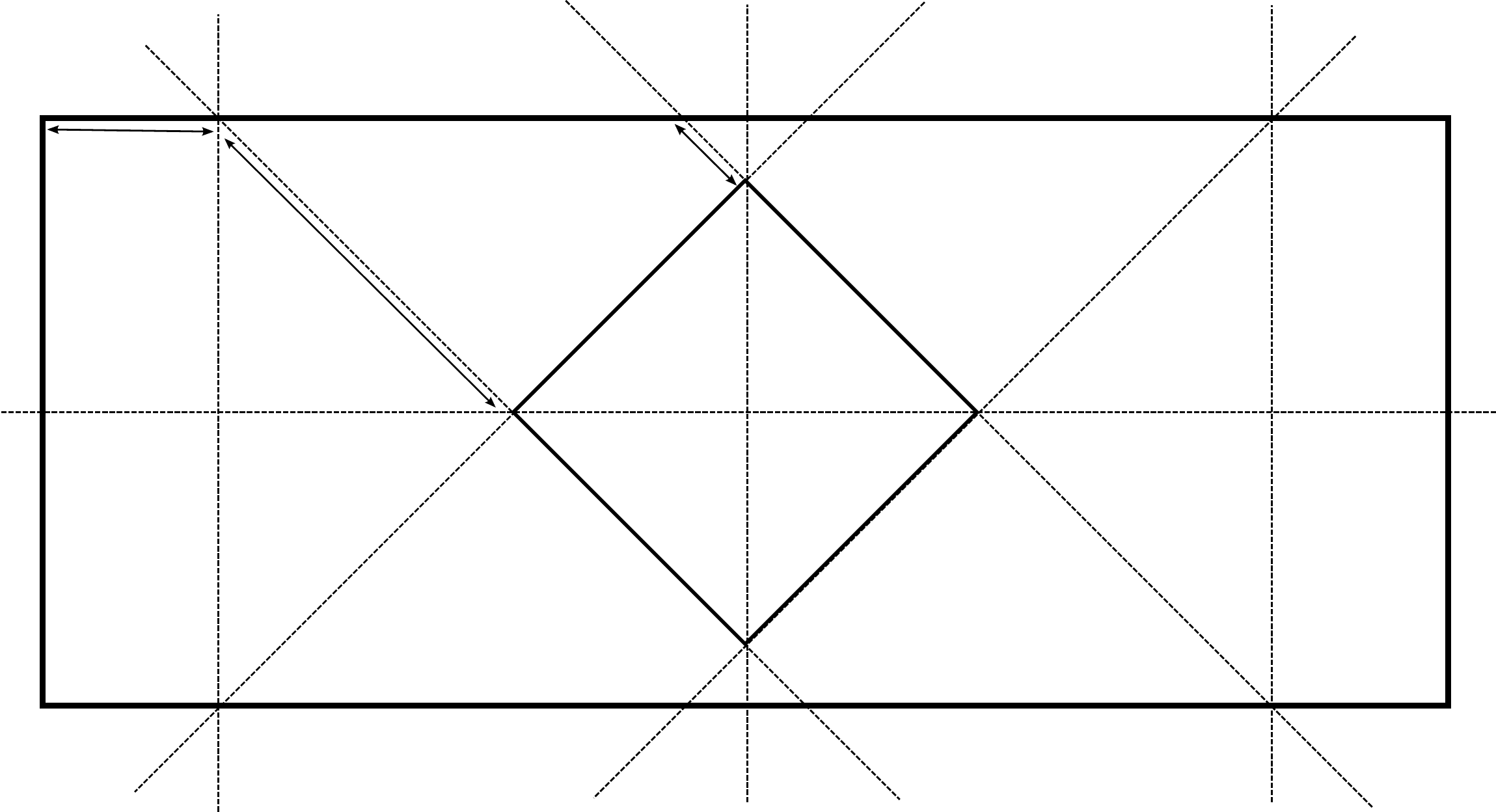
%\caption{}
\end{figure}
\end{ex}

\noindent Note that $l_1=\sqrt{2}\lambda a$, $l_2=\sqrt{2}(\lambda a-1)$ and $l_3=a-(1+\lambda a)$. Let $\omega(a)=\mu(aR)$. We have
\begin{align*}
	\omega(a) & = 2+4\sqrt{2}p \cdot \frac{l_1+l_2}{2} + p^2l_1^2 + p^2l_2^2 + 4p^2l_3 \lambda a \\ &  = 2+4p(2\lambda a -1) + 2p^2 \lambda^2 a^2  + 2p^2(\lambda a -1)^2 + 4p^2 \lambda a(a-1-\lambda a) \\
	& = 2(1-p)^2 + 4p\lambda a(pa+2-2p)=d_0+d_1a+d_2a^2,
\end{align*}
where $d_0=2(1-p)^2$, $d_1=8p(1-p)\lambda$, $d_2=4p^2\lambda$.
We show that $\psi$ is strictly convex for $p \in (0,1)$ and $0<\lambda<1/2$.  Indeed, $\psi''>0$ is equivalent to $2\omega \omega'' > (\omega')^2$. But
\begin{align*}
	2\omega(a) \omega''(a) - (\omega'(a))^2 & = 4 d_2(d_0+d_1a+d_2a^2) -  (2d_2 a+d_1)^2 = 4d_2d_0-d_1^2 \\
	& = 32\lambda p^2(1-p)^2 - 64\lambda^2 p^2(1-p)^2 = 32 \lambda p^2(1-p)^2(1-2\lambda)>0.
\end{align*}

We would like to finish the paper with a list of open questions that arose during our study.

\begin{que*}
Let us assume that the measure $\mu$ has an even log-concave density (not-necessarily product).
\begin{itemize}
\item Does the assertion of Theorem \ref{BMgen} holds true for arbitrary symmetric sets $A$ and $B$?
\item If not, is it true under additional assumption that the measure is product? \item  In particular, can one remove the assumption of unconditionality in the Gaussian Brunn-Minkowski inequality?
\end{itemize}
\end{que*}

\section*{Acknowledgements}

We would like to thank Rafa\l \ Lata\l a and Tomasz Tkocz for reading an early version of this article. The third named author would like to acknowledge the hospitality of the Department of Mathematical Sciences, Kent State University, where part of this work was conducted.

\vspace{1cm}

\noindent Galyna Livshyts \\
Department of Mathematical Sciences \\
Kent State University \\
Kent, OH 44242, USA \\
E-mail address: glivshyt@kent.edu

\vspace{0.8cm}

\noindent Arnaud Marsiglietti \\
Institute for Mathematics and its Applications \\
University of Minnesota \\
207 Church Street, 434 Lind Hall, \\
Minneapolis, MN 55455, USA \\
E-mail address: arnaud.marsiglietti@ima.umn.edu

\vspace{0.8cm}

\noindent Piotr Nayar \\
Institute for Mathematics and its Applications \\
University of Minnesota \\
207 Church Street, 432 Lind Hall, \\
Minneapolis, MN 55455, USA \\
E-mail address: nayar@ima.umn.edu

\vspace{0.8cm}

\noindent Artem Zvavitch \\
Department of Mathematical Sciences \\
Kent State University \\
Kent, OH 44242, USA \\
E-mail address: zvavitch@math.kent.edu

\end{document}